\title{Integral Bases of Pure fields}
\DeclareMathOperator{\lcm}{lcm}
\DeclareMathOperator{\ind}{ind}
\def\Q{{\mathbb Q}}
\def\N{{\mathbb N}}
\def\Z{{\mathbb Z}}
\def\R{{\mathbb R}}
\def\F{{\mathbb F}}
\newtheorem{lemma}{Lemma}
\newtheorem{theorem}[lemma]{Theorem}
\newtheorem{corollary}[lemma]{Corollary}
\newtheorem{proposition}[lemma]{Proposition}
\theoremstyle{definition}
\newtheorem{remark}[lemma]{Remark}
\title{
Integral basis of pure fields with square-free parameter
}
\author{
L\'aszl\'o Remete\thanks{
        Research supported by the \'UNKP-18-3 new national excellence program of the ministry of human capacities.} \\
{\small University of Debrecen, Mathematical Institute} \\
{\small H--4002 Debrecen Pf.400., Hungary,}\\ 
{\small e--mail: remete.laszlo@science.unideb.hu}
}
\date{}
\begin{document}
\maketitle
\thispagestyle{empty}

\renewcommand{\thefootnote}{\arabic{footnote}}
\setcounter{footnote}{0}

\begin{abstract}
Let $m\neq0,\pm1$ and $n\geq 2$ be integers. The ring of algebraic integers of the pure fields of type $\Q(\sqrt[n]{m})$ is explicitly known for $n=2,3,4$. It is well known that for $n=2$, an integral basis of the pure quadratic fields can be given parametrically, by using the remainder of the square-free part of $m$ modulo 4. Such characterisation of an integral basis also exists for cubic and quartic pure fields, but for higher degree pure fields there are only results for special cases.\\
In this paper we explicitly give an integral basis of the field $\Q(\sqrt[n]{m})$, where $m\neq\pm1$ is square-free. Furthermore, we show that similarly to the quadratic case, an integral basis of $\Q(\sqrt[n]{m})$ is repeating periodically in $m$ with period length depending on $n$.
\end{abstract}

\renewcommand{\thefootnote}{}
\footnote{Keywords and phrases: Integral basis, pure fields, Newton polygons}
\footnote{Mathematics Subject Classification: 11R04}

\section{Introduction}

Let $m\neq 0,\pm 1$, and $n\geq 2$ be integers. There is an extensive literature of pure fields of type $K=\Q(\sqrt[n]{m})$.
It is well-known, that if $m$ is square-free, then an integral basis of $K$ is given by
$$\left\lbrace
\begin{array}{lr}
(1,\sqrt{m}),&\mbox{ if } m\equiv2,3\mod{4},\\
\left(1,\frac{1+\sqrt{m}}{2}\right),& \mbox{ if } m\equiv1\mod{4}.
\end{array}\right.$$
For pure cubic fields $K=\Q(\sqrt[3]{m})$, R.Dedekind \cite{ded} explicitly gave an integral basis, which depends on the remainder of $m$ modulo 9. In this case $m$ is assumed to be cube-free,without loss of generality, so we can write uniquely $m=ab^2$, where $a$ and $b$ are square-free. An integral basis of pure cubic fields is given by
$$\left\lbrace
\begin{array}{lr}
\left(1,\sqrt[3]{m},\frac{\sqrt[3]{m}^2}{b}\right),&\mbox{ if } m\equiv0,2,3,4,5,6,7\mod{9},\\
\left(1,\sqrt[3]{m},\frac{b^2+b^2\cdot\sqrt[3]{m}+\sqrt[3]{m}^2}{3b}\right),& \mbox{ if } m\equiv1\mod{9},\\
\left(1,\sqrt[3]{m},\frac{b^2-b^2\cdot\sqrt[3]{m}+\sqrt[3]{m}^2}{3b}\right),& \mbox{ if } m\equiv-1\mod{9}.
\end{array}\right.$$
The pure quartic fields were studied by T.Funakura \cite{fun}, who explicitly gave an integral basis of the fields $\Q(\sqrt[4]{m})$ by using that these fields are quadratic extensions of $\Q(\sqrt{m})$.\\\\
For pure number fields of higher degree, there are only special results. If $n$ is square-free, then the discriminant and an integral basis of pure fields $\Q(\sqrt[n]{m})$ were given in W.E.H.Berwick \cite{ber}, and if $n$ and $m$ are coprime then in K.Okutsu \cite{oku}. A.Hameed and T.Nakahara \cite{n3} constructed an integral basis of pure octic fields $\Q(\sqrt[8]{m})$, where $m$ is square-free. A.Hameed, T.Nakahara, S.M.Husnine and S.Ahmad \cite{nnn} studied the existence of canonical number systems in pure fields, which is equivalent to the existence of a power integral basis in these fields. They proved, that if $m$ is square-free and all prime divisors of $n$ divides $m$, then the ring of integers of $\Q(\sqrt[n]{m})$ is $\Z[\sqrt[n]{m}]$.
T.A.Gassert \cite{maps} proved, that the ring of integers of $\Q(\sqrt[n]{m})$ is $\Z[\sqrt[n]{m}]$, if and only if $m$ is square-free, and for all primes $p\mid n$, $p^2$ does not divide $m^p-m$. \\
I.Gaál and L.Remete \cite{ibm} explicitly gave an integral basis of the pure fields $\Q(\sqrt[n]{m})$, where $2<n<10$, and $m$ is square-free. Further, they showed that similarly to the quadratic case, if $m$ is square-free, then an integral basis of these fields is repeating periodically in $m$ with period length $n_0$, which depends only on $n$. For any degree, they proved that this period length is at most $n_0=n^{\lfloor n^2/2\rfloor}$, and for $2<n<10$ this can be reduced to $n_0=n^2$.\\\\
Let $f_m(X)=X^n-m$, where $m$ is a square-free integer. Consider the algebraic number fields $K_m=\Q(\alpha_m)$, where $\alpha_m$ is a root of $f_m(X)$. These number fields are of degree $n$, since we assumed that $m$ is square-free, and therefore $f_m(X)$ is irreducible. \\
We will say that an integral basis of the fields $K_m$ is \textit{repeating periodically modulo $n_0$}, if for each residue class $r$ modulo $n_0$, there exist polynomials $h^{(r)}_i(X)\in\Q[X]$, $(i=0,\ldots n-1)$, such that, if $m\equiv r\mod{n_0}$, then  
$$\left(h^{(r)}_0(\alpha_m),h^{(r)}_1(\alpha_m),h^{(r)}_2(\alpha_m),\ldots,h^{(r)}_{n-1}(\alpha_m)\right), $$
is an integral basis of $K_m$.

\begin{remark}
For example, in the case of the pure cubic fields, let $n_0=9$, and
$$h^{(r)}_0(X)=1,\quad h^{(r)}_1(X)=X,\quad h^{(r)}_2(X)=X^2,\quad(r=2,3,4,5,6,7),$$
$$h^{(1)}_0(X)=1,\quad h^{(1)}_1(X)=X,\quad h^{(1)}_2(X)=\frac{1+X+X^2}{3},$$
$$h^{(8)}_0(X)=1,\quad h^{(8)}_1(X)=X,\quad h^{(8)}_2(X)=\frac{1-X+X^2}{3}.$$
These polynomials fulfil the conditions in the definition, so we can say that an integral basis of the pure cubic fields $\Q(\sqrt[3]{m})$, where $m$ is square-free, is repeating periodically modulo 9.
\end{remark}

In this approach, the results above imply, that if $m$ is square-free, then 
\begin{itemize}
\item an integral basis of $\Q(\sqrt{m})$ is repeating periodically modulo $4$,
\item an integral basis of $\Q(\sqrt[3]{m})$ is repeating periodically modulo $9$,
\item an integral basis of $\Q(\sqrt[4]{m})$ is repeating periodically modulo $8$,
\item an integral basis of $\Q(\sqrt[n]{m})$ is repeating periodically modulo $n^2$, for $2\leq n\leq10$,
\item an integral basis of $\Q(\sqrt[n]{m})$ is repeating periodically modulo $n^{\lfloor n^2/2\rfloor}$, for any $n$.
\end{itemize}

\begin{remark}
The above periodicity implies, that for any number field $\Q(\sqrt[n]{m})$ of degree $n$, if $m_1$ and $m_2$ are square-free integers belonging to the same residue class modulo $n_0$, then for any $h(X)\in\Q[X]$, $h(\sqrt[n]{m_1})$ is an algebraic integer if and only if $h(\sqrt[n]{m_2})$ is an algebraic integer.
\end{remark}
Similar phenomenon was proved for simplest sextic fields, if $m^2+3m+9$ is square-free (see I.Gaál and L.Remete \cite{sextic}).\\
The aim of this paper is to generalize the results of \cite{ibm}, and to give an integral basis of any pure field $\Q(\sqrt[n]{m})$, where $m\neq\pm1$ is a square-free integer.\\
In Section \ref{s1} we show, that if $m$ is square-free, and $M=\Q(\sqrt[n]{m})$, then 
$$\frac{n}{\gcd(n,m)}\cdot {\cal O}_M\subset\Z[\sqrt[n]{m}],$$
where ${\cal O}_M$ denotes the ring of integers of $M$. This will imply, that if an integral basis of these fields is repeating periodically, then the period length will be independent from $m$.\\
In Section \ref{s2} we give an integral basis of $\Q(\sqrt[p^k]{m})$, where $p$ is a prime, and $m\neq\pm1$ is square-free, by using the theory of Newton-polygons. We obtain that an integral basis of $\Q(\sqrt[p^k]{m})$ is repeating periodically modulo $n_0=p^{k+1}$. By using these results, in Section \ref{ex1} we give an integral basis of the infinite parametric family of number fields $\Q(\sqrt[9]{27t+1})$.\\
In Section \ref{s3}, we extend our results for composite values of $n$. We prove that if $m$ is square-free, and the prime decomposition of $n$ is 
$$n=\prod_{i=1}^{r}p_i^{k_i},$$
then  an integral basis of $\Q(\sqrt[n]{m})$ is repeating periodically modulo $n_0$, where 
$$n_0=\prod_{i=1}^{r}p_i^{k_i+1}.$$
As an example demonstrating our method, in Section \ref{ex2} we give an integral basis of the infinite parametric family of number fields of type $\Q(\sqrt[12]{m})$, where $m$ is a square-free integer.

\section{Preliminaries}\label{s1}
Let $\alpha$ be an algebraic integer of degree $n$, and $M=\Q(\alpha)$. Let ${\cal O}_M$ denote the ring of integers in $M$.
The \textit{index} of $\alpha$ is defined by
$$\ind(\alpha):=({\cal O}_M^+:\Z[\alpha]^+).$$
The index of $\alpha$ is nothing else, than the product of the denominators of the elements of an integral basis of $M$ written in the basis $(1,\alpha,\ldots,\alpha^{n-1})$.
It can be shown (see for eg. \cite{Nark} Chapter II. Proposition 2.13), that if $D(\alpha)$ denotes the discriminant of the minimal polynomial of $\alpha$ (which is equal to the discriminant of the basis $(1,\alpha,\ldots,\alpha^{n-1})$), and $D_M$ denotes the discriminant of the field $M$, then
$$D(\alpha)=\ind(\alpha)^2\cdot D_M.$$
Let $v_p:\Q\longrightarrow\Z$ denote the $p$-adic valuation of a rational number.\\
The \textit{$p$-index} of an algebraic integer is defined by
$$\ind_p(\alpha)=v_p(\ind(\alpha)).$$
By this definition we have
$$\ind(\alpha)=\prod_{p\mid D(\alpha)}p^{\ind_p(\alpha)}.$$
We would like to show that if $m$ is square-free, then for any $n\geq2$ there exists a "small" $n_0\in\N$, such that an integral basis of the fields $\Q(\sqrt[n]{m})$ is repeating periodically modulo $n_0$. I.Gaál and L.Remete \cite{ibm} proved this with $n_0=n^{\lfloor n^2/2\rfloor}$, but it is far from being the best possible period length. \\
First we show that the denominator of any algebraic integer in $\Q(\sqrt[n]{m})$ (written in the basis $(1,\sqrt[n]{m},\ldots,\sqrt[n]{m^{n-1}})$, with simplified coefficients), divides $n$ and is coprime to $m$.

\begin{lemma}\label{ein}
(\cite{Nark} Chapter II., Lemma 2.17)
Let $M$ be a number field of degree $n$, and let $\alpha\in M$ be a nonzero algebraic integer of degree $n$. Suppose that the minimal polynomial of $\alpha$ is Eisenstein with respect to the prime $p$. Then $\ind_p(\alpha)=0$.
\end{lemma}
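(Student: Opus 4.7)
The plan is to exploit the fact that a $p$-Eisenstein polynomial of degree $n$ defines a totally ramified extension of $\Q_p$ in which $\alpha$ serves as a uniformizer. Since the minimal polynomial $f(X)$ of $\alpha$ is Eisenstein at $p$, it remains irreducible over $\Q_p$, so there is a unique prime $\mathfrak{p}$ of $M$ above $p$ with ramification index $e(\mathfrak{p}/p)=n$, and the unique extension $w$ of $v_p$ to $M$ (normalized so that $w|_\Q=v_p$) satisfies $w(\alpha)=1/n$. This can also be read off the Newton polygon of $f$, which consists of a single segment of slope $-1/n$ (placing it comfortably in the framework used elsewhere in the paper).

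Next I would take an arbitrary $\beta\in{\cal O}_M$ and expand it in the power basis as $\beta=\sum_{i=0}^{n-1}c_i\alpha^i$ with $c_i\in\Q$. Each term has valuation $w(c_i\alpha^i)=v_p(c_i)+i/n$. The key observation is that for $0\le i<j\le n-1$ the fractional parts $i/n$ and $j/n$ are distinct, so the values $w(c_i\alpha^i)$ and $w(c_j\alpha^j)$ lie in different cosets of $\Z$ in $\tfrac1n\Z$ and can never coincide. By the non-archimedean strong triangle inequality this forces
$$w(\beta)=\min_{c_i\ne0}\bigl(v_p(c_i)+i/n\bigr),$$
with no cancellation possible.

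Concluding, $\beta\in{\cal O}_M$ gives $w(\beta)\ge0$, hence $v_p(c_i)\ge -i/n>-1$ for each $i$; since $v_p(c_i)\in\Z$ we must have $v_p(c_i)\ge0$, i.e., $p$ does not appear in the denominator of any $c_i$. Thus every element of ${\cal O}_M$ already lies in the localisation $\Z_{(p)}[\alpha]$, so $p$ does not divide $\ind(\alpha)$ and $\ind_p(\alpha)=0$. The only step that needs care is the justification of the uniqueness of $w$ and the value $w(\alpha)=1/n$; this is the standard content of the theory of Eisenstein extensions, so I do not expect it to be a serious obstacle. The remainder of the argument is essentially bookkeeping with the valuations of the coordinates $c_i$.
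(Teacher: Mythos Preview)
Your argument is correct and is essentially the standard proof of this classical fact: Eisenstein at $p$ forces $f$ to stay irreducible over $\Q_p$, hence $p$ is totally ramified in $M$, $\alpha$ has normalized valuation $1/n$, and the distinct fractional parts $i/n$ prevent any cancellation among the terms $c_i\alpha^i$, yielding $v_p(c_i)\ge 0$ for every coordinate of an integral $\beta$.

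There is nothing to compare against in the paper itself: the lemma is stated with a citation to Narkiewicz (\cite{Nark}, Chapter~II, Lemma~2.17) and no proof is given. Your write-up matches the usual textbook treatment (and in particular the one in Narkiewicz), so it is entirely adequate here. The only cosmetic point is that you should phrase ``the unique extension $w$ of $v_p$ to $M$'' a little more carefully: uniqueness holds precisely \emph{because} $f$ is irreducible over $\Q_p$, which you do state, so this is not a gap but just a matter of ordering the sentences.
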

If $m\neq\pm1$ is square-free, then $X^n-m$ is Eisenstein with respect to all prime divisor of $m$.\\
Hence, if $m\neq\pm1$ is square-free, then $\ind(\sqrt[n]{m})$ is coprime to $m$, and
$$\ind(\sqrt[n]{m})=\prod_{p\mid C} p^{\ind_p(\sqrt[n]{m})},$$
where 
$$C=\frac{D(\sqrt[n]{m})}{\gcd(D(\sqrt[n]{m}),m)}.$$
Furthermore, since $\ind(\sqrt[n]{m})^2$ divides $ D(\sqrt[n]{m})=(-1)^{\frac{n(n-1)}{2}}\cdot n^nm^{n-1}$, hence
$$\ind(\sqrt[n]{m})^2\left|\left(\frac{n}{\gcd(n,m)}\right)^n.\right.$$ 
\begin{corollary}\label{den}
If $m\neq\pm1$ is square-free, then any algebraic integer in  $\Q(\sqrt[n]{m})$ can be written in the basis $(1,\sqrt[n]{m},\ldots,\sqrt[n]{m^{n-1}})$, with rational coefficients having denominator $d$, for which $\gcd(d,m)=1$.
\end{corollary}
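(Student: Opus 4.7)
The plan is to derive the corollary directly from the two facts established earlier in this section: that $X^n-m$ is Eisenstein at every prime $p\mid m$ when $m$ is square-free, and that, by Lemma \ref{ein}, this forces $\ind_p(\sqrt[n]{m})=0$ for every such $p$, so that $\gcd(\ind(\sqrt[n]{m}),m)=1$.

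Set $\alpha=\sqrt[n]{m}$. The structural input I would use is the standard fact that the integer $\ind(\alpha)=[\mathcal{O}_M^+:\Z[\alpha]^+]$ annihilates the finite abelian quotient $\mathcal{O}_M/\Z[\alpha]$; equivalently, $\ind(\alpha)\cdot\mathcal{O}_M\subset\Z[\alpha]$. Applied to an arbitrary $\beta\in\mathcal{O}_M$, this yields a polynomial $g(X)\in\Z[X]$ of degree less than $n$ satisfying
$$\beta=\frac{g(\alpha)}{\ind(\alpha)}.$$
Writing $\beta$ in reduced form in the basis $(1,\alpha,\ldots,\alpha^{n-1})$ -- that is, cancelling any common factor of $\ind(\alpha)$ and the coefficients of $g$ -- produces a denominator $d$ that divides $\ind(\alpha)$. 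Combined with $\gcd(\ind(\alpha),m)=1$, this immediately gives $\gcd(d,m)=1$.

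I do not expect a substantive obstacle: the corollary is essentially a repackaging of the index computation carried out immediately before its statement. The only point that requires a sentence of care is the meaning of the word ``denominator'' in the power basis; it must be interpreted as the reduced common denominator obtained after dividing $\ind(\alpha)$ and the coefficients of $g$ by their greatest common divisor. It is precisely this reduced denominator that is forced to divide $\ind(\alpha)$ and hence to be coprime to $m$.
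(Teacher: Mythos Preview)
Your argument is correct and follows essentially the same approach as the paper: both use Lemma~\ref{ein} together with the Eisenstein property of $X^n-m$ at every prime dividing $m$ to conclude $\gcd(\ind(\sqrt[n]{m}),m)=1$, and then pass from this to the statement about denominators via the standard fact that $\ind(\alpha)\cdot\mathcal{O}_M\subset\Z[\alpha]$. The paper leaves this last step implicit (relying on its earlier remark that the index is the product of the denominators of an integral basis), whereas you spell it out explicitly, but the content is the same.
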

Let $M$ be an algebraic number field of degree $n$. If $(\alpha_0,\alpha_1,\ldots,\alpha_{n-1})$ is a $\Q$-basis of $M$, then its dual basis is $(\gamma_0,\gamma_1,\ldots,\gamma_{n-1})$, where $\gamma_i\in M$, such that 
$$Tr_{M/\Q}(\gamma_i\cdot \alpha_j)=\delta_{ij}=\left\lbrace
\begin{array}{ll}
1, & $if $i=j,\\
0, & $otherwise.$
\end{array}
\right.
$$
The dual basis of any basis of $M$ is always uniquely exists (see for eg. \cite{pohst}).\\
Let $(\alpha_0,\alpha_1,\ldots,\alpha_{n-1})$ be a $\Q$-basis of $M$ containing algebraic integers, and $(\gamma_0,\gamma_1,\ldots,\gamma_{n-1})$ its dual basis. Let $\beta\in {\cal O}_M$, and represent it in the dual basis:
$$\beta=f_0\gamma_0+f_1\gamma_1+\ldots+f_{n-1}\gamma_{n-1},\qquad (f_0,\ldots f_{n-1}\in \Q)$$
Let us consider the trace of the product of $\beta$ and $\alpha_i$, $(i=0,1,\ldots,n-1)$:
$$Tr_{M/\Q}(\beta\cdot\alpha_i)=f_{0}Tr_{M/\Q}(\gamma_0\cdot\alpha_i)+f_{1}Tr_{M/\Q}(\gamma_1\cdot\alpha_i)+\ldots+f_{n-1}Tr_{M/\Q}(\gamma_{n-1}\cdot\alpha_i)=f_i.$$
Since $\beta$ and $\alpha_i$ are algebraic integers, hence $Tr_{M/\Q}(\beta\cdot\alpha_i)\in\Z$, whence the coefficients of $\beta$ with respect to the basis $(\gamma_0,\gamma_1,\ldots,\gamma_{n-1})$, are rational integers.

\begin{theorem}\label{nes}
Let $m\neq\pm1$ be a square-free integer and $M=\Q(\sqrt[n]{m})$. Then 
$$\frac{n}{\gcd(n,m)}\cdot {\cal O}_M\subset \Z[\sqrt[n]{m}].$$
\end{theorem}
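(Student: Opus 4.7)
The plan is to exploit the dual-basis machinery set up in the preliminaries, using the power basis $(1,\alpha,\alpha^2,\dots,\alpha^{n-1})$ with $\alpha=\sqrt[n]{m}$. Since $\alpha$ is a root of $f(X)=X^n-m$, the classical formula for the trace-dual of a power basis (write $f(X)/(X-\alpha)=\sum_{i=0}^{n-1}c_iX^i$ and divide by $f'(\alpha)$) yields
$$\gamma_0=\frac{1}{n},\qquad \gamma_i=\frac{\alpha^{n-1-i}}{n\alpha^{n-1}}=\frac{\alpha^{n-i}}{nm}\quad(1\leq i\leq n-1).$$
The argument reproduced just before the theorem shows that any $\beta\in\mathcal{O}_M$ has \emph{integer} coordinates in this dual basis: $\beta=\sum_{i=0}^{n-1}f_i\gamma_i$ with $f_i\in\Z$. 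Rewriting in the power basis (substitute $j=n-i$) gives the standard form
$$\beta=\frac{f_0}{n}+\sum_{j=1}^{n-1}\frac{f_{n-j}}{nm}\,\alpha^j.$$

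Let $d=\gcd(n,m)$. I would now multiply this representation by $n/d$ and verify, coefficient by coefficient, that the result lies in $\Z[\alpha]$. The constant coefficient becomes $f_0/d$, and the $\alpha^j$-coefficient becomes $f_{n-j}/(dm)$, so the task reduces to proving $d\mid f_0$ and $dm\mid f_{n-j}$ for $j=1,\dots,n-1$. This is exactly where Corollary \ref{den} is brought in: the denominators of the power-basis coefficients of $\beta$ are coprime to $m$, which forces $p$-adic lower bounds on the $f_i$ for every prime $p\mid m$.

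The bookkeeping, which I expect to be the only real difficulty, is a case split on primes. Since $m$ is square-free, for a prime $p\mid m$ one has $v_p(nm)=v_p(n)+1$, and the coprimality statement in Corollary \ref{den} gives $v_p(f_{n-j})\geq v_p(n)+1$ and $v_p(f_0)\geq v_p(n)$. For $p\mid d$ both $p\mid n$ and $p\mid m$ hold, so $v_p(d)=v_p(m)=1$ and $v_p(n)\geq 1$; hence
$$v_p\!\left(\frac{f_{n-j}}{dm}\right)=v_p(f_{n-j})-v_p(d)-v_p(m)\geq (v_p(n)+1)-1-1\geq 0,$$
and similarly $v_p(f_0/d)\geq v_p(n)-v_p(d)\geq 0$. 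For a prime $p\mid m$ with $p\nmid d$ one has $v_p(n)=0$, and the inequalities $v_p(f_{n-j})\geq 1=v_p(dm)$ and $v_p(f_0)\geq 0=v_p(d)$ hold trivially. For primes $p\nmid m$ the conditions $v_p(f_{n-j})\geq v_p(dm)=0$ and $v_p(f_0)\geq v_p(d)=0$ are automatic. Thus every coefficient of $(n/d)\beta$ in the power basis is a rational integer, which gives $(n/d)\beta\in\Z[\alpha]$ and hence the claimed inclusion $(n/\gcd(n,m))\cdot\mathcal{O}_M\subset\Z[\sqrt[n]{m}]$.
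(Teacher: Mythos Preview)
Your proof is correct and follows essentially the same approach as the paper: compute the trace-dual of the power basis, conclude that any $\beta\in\mathcal{O}_M$ has integer coordinates in that dual basis (hence power-basis denominators dividing $nm$), and then invoke Corollary~\ref{den} to strip out the $m$-part. The only difference is cosmetic: where you carry out an explicit prime-by-prime $p$-adic bookkeeping to reach $d\mid f_0$ and $dm\mid f_{n-j}$, the paper simply observes that a denominator dividing $nm$ and coprime to $m$ must divide $n$, and hence (being coprime to $m$) must divide $n/\gcd(n,m)$.
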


\begin{proof}
To prove the theorem, it is sufficient to show, that the denominators of the coefficients of any algebraic integer in $M$, with respect to the $\Q$-basis $(1,\sqrt[n]{m},\ldots,\sqrt[n]{m^{n-1}})$, divide $n$ and are coprime to $m$. It is straightforward to check, that the dual basis of $(1,\sqrt[n]{m},\ldots,\sqrt[n]{m^{n-1}})$, is 
$$\left(\frac{1}{n},\frac{\sqrt[n]{m^{n-1}}}{n\cdot m},\frac{\sqrt[n]{m^{n-2}}}{n\cdot m},\ldots,\frac{\sqrt[n]{m}}{n\cdot m}\right).$$
Since it is a dual basis of a basis containing algebraic integers, any algebraic integer in $M$ have rational integer coefficients, with respect to this basis. It means, that we can write any algebraic integer in the basis $(1,\sqrt[n]{m},\ldots,\sqrt[n]{m^{n-1}})$, with rational coefficients having a common denominator dividing $n\cdot m$, but by Corollary \ref{den}, these denominators are coprime to $m$, therefore, they divide $n$.
\end{proof}

\subsection{Newton polygons in algebraic number theory}

Based on \cite{ore},\cite{GMN} and \cite{FMN} we briefly describe here the method of \O.Ore using Newton polygons to determine the $p$-index of an algebraic integer.\\
Extend $v_p$ to a discrete valuation of $\Q(X)$ by letting it act in the following way on polynomials:
$$v_p(a_0+a_1X+\ldots+a_rX^r):=\underset{0\leq i\leq r}\min\left\lbrace v_p(a_i)\right\rbrace.$$
Let $f(X)\in\Z[X]$, and $\phi(X)\in\Z[X]$ be a monic lift of an irreducible factor of $f(X)$ modulo $p$. There is a unique expression of $f(X)$ as a development of $\phi(X)$, that is
$$f(X)=a_0(X)+a_1(x)\cdot \phi(X)+a_2(X)\cdot\phi(X)^2+\ldots+a_r(X)\cdot\phi(X)^r,$$
where $a_i(X)\in\Z[X]$ and $\deg(a_i)<\deg(\phi)$.\\
For $0\leq i\leq r$, let $u_i=v_p(a_i(X))$. The $\phi$-Newton polygon of $f$ is defined to be the lower convex hull of the points
$$\left\lbrace (i,u_i) \in \R^2: 0\leq i\leq r\right\rbrace .$$
The polygon determined by the sides of the $\phi$-Newton polygon with negative slopes is called the {\bf principal $\phi$-Newton polygon of $f$}, and it is denoted by $N$.\\
The \textbf{$\phi$-index} of $f$ is the product of the degree of $\phi$ and the number of points with integral coordinates lying in the first quadrant, on or below $N$. This $\phi$-index is denoted by $\ind_\phi(f)$.\\\\
Let $\overline{f}$ denote the reduction of $f\in\Z[X]\mod{p}$. For $0\leq i\leq r$, let $c_i\in\Z[X]/(p,\phi)$ be
$$
\displaystyle
c_i=
\left\lbrace \begin{array}{ll}
\displaystyle \overline{\left( \frac{a_i(x)}{p^{u_i}}\right) }\in\Z[X]/(p,\phi), & $if $(i,u_i)$ lies on $N,\\
&\\
0,& $otherwise$.
\end{array}\right\rbrace 
$$
Let $S$ be one of the sides of $N$. Let $\lambda=\frac{-h}{e}$ be the slope of $S$, where $h,e$ are positive coprime integers. Let $l$ be the length of the projection of $S$ onto the $x$-axis, and $d:=\frac{l}{e}$.\\
Let $t$ be the abscissa of the initial vertex of $S$, then the polynomial
$$R_\lambda(f)(Y)=c_t+c_{t+e}Y+\ldots+c_{t+(d-1)e}Y^{d-1}+c_{t+de}Y^d\in\left( \Z[X]/(p,\phi)\right)[Y]
$$
is called the {\bf residual polynomial} attached to $S$.\\
If $R_\lambda(f)(Y)$ is separable for each sides of the $\phi$-polygon, then $f$ is called \textbf{$\phi$-regular}.
\begin{theorem}\label{Monti}
(See \cite{GMN} Theorem 4.18) Let $f\in\Z[X]$, and choose monic polynomials $\phi_1,\ldots,\phi_k\in \Z[X]$, whose reductions modulo $p$ are the different irreducible factors of $\overline{f}$. Let $\alpha$ be a root of $f$, $K=\Q(\alpha)$, then 
$$\ind_p(\alpha)=v_p(({\cal O}_K^+:\Z[\alpha]^+))\geq \ind_{\phi_1}(f)+\ldots+\ind_{\phi_k}(f).
$$
Furthermore, equality holds if and only if $f$ is $\phi_i$-regular, for every $\phi_i$.\\
\end{theorem}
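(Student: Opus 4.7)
The plan is to pass to the $p$-adic completion, split $f$ along the irreducible factors of $\bar f$, and then along the sides of each $\phi_i$-Newton polygon. First I would invoke Hensel's lemma to lift the factorization $\bar f = \prod_i \bar\phi_i^{r_i}$ in $\F_p[X]$ to a factorization $f = F_1\cdots F_k$ in $\Z_p[X]$ with $\overline{F_i}\equiv \bar\phi_i^{r_i}$. This produces an isomorphism $K\otimes_\Q\Q_p\cong \prod_i \Q_p[X]/(F_i)$. Because the resultants of distinct $\bar\phi_i$ are units, the $p$-index is additive across this decomposition: $\ind_p(\alpha)=\sum_i \ind_p(\alpha_i)$, where $\alpha_i$ is the image of $\alpha$ in the $i$-th factor. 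This reduces the theorem to the one-factor statement, namely $\ind_p(\alpha_i)\geq \ind_{\phi_i}(f)$ with equality precisely when $f$ is $\phi_i$-regular.

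Second, for a fixed $\phi=\phi_i$ I would read off the slopes $\lambda_j=-h_j/e_j$ of the sides $S_j$ of the principal $\phi$-Newton polygon $N$ of $F_i$ and perform a further Hensel-type factorization $F_i=\prod_j F_{i,j}$ in $\Z_p[X]$, where $F_{i,j}$ isolates the roots whose $\phi$-valuation equals the slope of $S_j$. Each $F_{i,j}$ has a single-segment polygon, and the classical lattice-point identity of Ore asserts that the number of integer points strictly below $S_j$ in the first quadrant, multiplied by $\deg\phi$, is exactly the first-order contribution to $v_p$ of the discriminant-conductor of $F_{i,j}$ relative to a $p$-maximal overorder. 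Summing over all sides $S_j$ and all $\phi_i$ gives the announced inequality $\ind_p(\alpha)\geq \sum_i \ind_{\phi_i}(f)$.

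Third, to characterise equality I would analyse the residual polynomial $R_{\lambda_j}(F_i)(Y)$ for each side: its monic irreducible factors in $(\Z[X]/(p,\phi))[Y]$ are in bijection with the primes of the completion $\Q_p[X]/(F_{i,j})$ above $p$, with prescribed ramification index $e_j$ and residue degree determined by $\deg\phi$ together with the degree of the factor. When every $R_{\lambda_j}$ is separable, these primes are already separated at first order, the corresponding local order is $p$-maximal, and the Ore bound is attained; a repeated irreducible factor forces one to pass to a higher-order Newton polygon in the Montes algorithm, and each such refinement strictly adds further lattice points below the new polygon, so the index strictly exceeds $\sum_i \ind_{\phi_i}(f)$. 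The main obstacle is this last step: turning ``separability of every $R_\lambda$'' into ``$p$-maximality of $\Z[\alpha]$'' rests on the full apparatus of Okutsu--Montes types, and rather than reprove it I would cite \cite{ore} for the first-order lattice-point interpretation and \cite{GMN,FMN} for the higher-order machinery that justifies the equality criterion.
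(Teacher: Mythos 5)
This statement is one the paper does not prove at all: it is imported verbatim as Theorem 4.18 of \cite{GMN} (Ore's theorem of the index in the Montes framework), so there is no internal argument to measure your sketch against. What you outline is essentially the standard proof from that same source: Hensel-lift $\overline{f}=\prod_i\overline{\phi_i}^{r_i}$ to $f=F_1\cdots F_k$ over $\Z_p$, use coprimality of the reductions (unit resultants) to make the $p$-index additive over the factors, then for each $\phi_i$ split further along the sides of the principal polygon and compare the lattice-point count with the local index, with separability of the residual polynomials governing when the first-order count is already exact. That is the right shape, and as an outline it is consistent with how the cited proof goes. Be aware, however, that your write-up is not a proof but a roadmap: the two load-bearing steps --- the lattice-point lower bound for the local index and, above all, the equivalence ``all $R_\lambda$ separable $\Longleftrightarrow$ equality'' --- are exactly the parts you defer back to \cite{ore}, \cite{GMN} and \cite{FMN}, which is the same act of citation the paper performs, only with more words. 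A few small inaccuracies worth fixing if you ever expand it: the index $\ind_\phi(f)$ counts lattice points \emph{on or below} the principal polygon in the first quadrant (per the paper's definition), not strictly below the sides; the additivity step also needs the (easy but necessary) remark that the principal $\phi_i$-polygon of $f$ coincides with that of $F_i$; the phrase that the side count ``is exactly the first-order contribution'' should be ``is a lower bound for'' if you want the inequality to follow by summation, with exactness only under regularity; and the claim that every higher-order refinement ``strictly adds'' lattice points is the content of the strict inequality in the non-regular case, not an independent justification of it.
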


\section{An integral basis of $\Q(\sqrt[n]{m})$, for  $n=p^k$ }\label{s2}

In this section we investigate the integral bases of pure number fields of type $M=\Q(\sqrt[n]{m})$, where $m$ is square-free, and the degree $n$ of these fields is a power of a prime $p$.

\begin{lemma}\label{phatv}
Let $m\neq\pm1$ be a square-free integer, $p$ be a prime, $k$ be a positive integer, $\alpha$ be a root of $X^{p^k}-m$ and $M=\Q(\alpha)$. Let $r$ be the remainder of $m$ modulo $p^{k+1}$ and 
$$s:=v_p(m^p-m)-1.$$
For any $t\in\N$, let $h^{(r)}_t(X)\in\Z[X]$ be 
$$h^{(r)}_t(X):=X^{p^k-p^{k-t}}+rX^{p^k-2p^{k-t}}+\ldots+r^{p^t-2}X^{p^{k-t}}+r^{p^t-1}=\frac{X^{p^k}-r^{p^t}}{X^{p^{k-t}}-r}.$$
Then for any $0\leq t\leq \min\{s,k\}$, 
$$\frac{1}{p^t}h^{(r)}_t(\alpha)=\alpha^{p^k-p^{k-t}}+r\alpha^{p^k-2p^{k-t}}+\ldots+r^{p^t-2}\alpha^{p^{k-t}}+r^{p^t-1}=\frac{m-r^{p^t}}{\alpha^{p^{k-t}}-r}\in\Q(\alpha)$$
is an algebraic integer.
\end{lemma}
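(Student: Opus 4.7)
The plan is to write down an explicit monic polynomial with integer coefficients satisfied by $\beta := \frac{1}{p^t}h_t^{(r)}(\alpha) = \frac{m - r^{p^t}}{p^t(\alpha^{p^{k-t}}-r)}$, and then verify integrality of its coefficients by a $p$-adic valuation calculation. Setting $\gamma := \alpha^{p^{k-t}}$ (so $\gamma^{p^t}=m$) and $N := m - r^{p^t}$, I rearrange the defining relation as $\gamma = r + N/(p^t\beta)$ and substitute into $\gamma^{p^t} = m$. Clearing denominators yields $(N + p^t r \beta)^{p^t} = m\cdot p^{tp^t}\beta^{p^t}$; after identifying the $\beta^{p^t}$-term on the left (coefficient $r^{p^t}p^{tp^t}$) and combining it with the right, one obtains $N p^{tp^t}\beta^{p^t} = \sum_{j=0}^{p^t-1}\binom{p^t}{j}N^{p^t-j}r^jp^{tj}\beta^j$. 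Dividing through by $N p^{tp^t}$ (legitimate since $m$ is square-free, so $N \ne 0$) gives the monic relation
$$\beta^{p^t} \;=\; \sum_{j=0}^{p^t-1} c_j\,\beta^j,\qquad c_j := \binom{p^t}{j}\,\frac{r^j N^{p^t-j-1}}{p^{\,t(p^t-j)}}.$$

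Since the only prime in the denominator of $c_j$ is $p$, it suffices to show $v_p(c_j)\ge 0$. Legendre's formula yields $v_p\!\left(\binom{p^t}{j}\right) = t - v_p(j)$ for $1 \le j \le p^t - 1$, so the estimate rearranges to
$$v_p(c_j) \;=\; (p^t - j - 1)\bigl(v_p(N) - t\bigr) - v_p(j).$$
The decisive input is the lower bound $v_p(N) \ge t + 1$. I plan to obtain it from two applications of lifting-the-exponent: first $v_p(m^{p^t} - m) = s + 1$, by writing $m^{p^t - 1} = (m^{p-1})^{1 + p + \cdots + p^{t-1}}$ and applying LTE with base $m^{p-1}$ (noting that $v_p(m^{p-1}-1) = s+1$ and the exponent $(p^t - 1)/(p-1)$ is coprime to $p$); second, $v_p(r^{p^t} - m^{p^t}) \ge k + 1 + t$, which follows from $r \equiv m \pmod{p^{k+1}}$ together with LTE applied to the exponent $p^t$. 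The ultrametric estimate $v_p(N) \ge \min\{s+1,\,k+1+t\}$ combined with the hypothesis $t \le s$ then gives $v_p(N) \ge s + 1 \ge t + 1$, so $v_p(N) - t \ge 1$ and the problem reduces to the inequality $p^t - j - 1 \ge v_p(j)$ for $1 \le j \le p^t - 1$; a direct check handles $c_0$.

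This last inequality is purely combinatorial: writing $j = p^v j'$ with $\gcd(j',p) = 1$, the extremal case $j = p^v$ reduces it to $v + 1 \le p^v$, which is immediate by induction. The degenerate case $p \mid m$ is handled separately: since $m$ is square-free, $v_p(m) = 1$ and $m^{p-1} \equiv 0 \pmod{p}$, so $v_p(m^p - m) = 1$, hence $s = 0$, forcing $t = 0$, for which $h_0^{(r)}(\alpha) = 1$ is trivially integral. The principal technical obstacle I anticipate is executing LTE uniformly in $p$: for $p = 2$ one must track the extra factor $v_2(r + m) \ge 1$ in the formula for $v_2(r^{2^t} - m^{2^t})$, but because this factor is always positive, it only strengthens the bound $v_p(r^{p^t} - m^{p^t}) \ge k + 1 + t$, so the uniform conclusion $v_p(N) \ge t + 1$ survives intact.
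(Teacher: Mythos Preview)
Your approach is essentially the paper's: both write down the explicit monic degree-$p^t$ polynomial satisfied by the element and verify $p$-integrality of its coefficients via the identity $v_p\binom{p^t}{j}=t-v_p(j)$ together with a lower bound on the $p$-valuation of the numerator. The only structural difference is that the paper first replaces $r$ by $m$ (observing $h_t^{(r)}\equiv H_t\pmod{p^{k+1}}$, where $H_t$ is obtained from $h_t^{(r)}$ by this substitution), so it can use $v_p(m-m^{p^t})=s+1$ directly and avoid your two-term LTE decomposition of $N$.

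Two minor slips to clean up: your intermediate claim $v_p(N)\ge s+1$ does not follow when $k+t<s$, since then $\min\{s+1,\,k+1+t\}=k+1+t<s+1$; however the bound you actually need, $v_p(N)\ge t+1$, does follow from $t\le\min\{s,k\}$. Also, in the combinatorial inequality $p^t-j-1\ge v_p(j)$, the extremal case for fixed $v=v_p(j)$ is $j=p^t-p^v$ rather than $j=p^v$; either way the required estimate is $p^v-1\ge v$, which you state correctly.
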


\begin{proof}
First we define a shift of the polynomial $h^{(r)}_t$. Let 
$$H_t(X):=X^{p^k-p^{k-t}}+mX^{p^k-2p^{k-t}}+\ldots+m^{p^t-2}X^{p^{k-t}}+m^{p^t-1}=\frac{X^{p^k}-m^{p^t}}{X^{p^{k-t}}-m}.$$
Since $p^{k+1}\mid m-r$, we have $H_t(X)-h^{(r)}_t(X)\in p^{k+1}\cdot\Z[X]$, and $t\leq k$ implies, that  
$$\frac{1}{p^t}h^{(r)}_t(\alpha)\in {\cal O}_M \quad\mbox{ if and only if }\quad \frac{1}{p^t}H_t(\alpha)\in {\cal O}_M.$$
Therefore, it is enough to show, that $\frac{1}{p^t}H_t(\alpha)$ is an algebraic integer.\\
Let $\beta_t:=\alpha^{p^{k-t}}$ and $\ell:=p^t$.
The minimal polynomial of $\beta_t$ is $X^\ell-m$, and 
$$H_t(\alpha)=\frac{m-m^\ell}{\beta_t-m}$$
is a rational transformation of $\beta_t$, so it is easy to determine the minimal polynomial of $H_t(\alpha)$.\\
Let
$$f(X):=\frac{(m-m^\ell+mX)^\ell-mX^\ell}{m-m^\ell}.$$
By substituting $H_t(\alpha)$ into $f$, we can see, that it is a root of $f$:
$$f(H_t(\alpha))=\frac{\left(m-m^{\ell}+m\left(\frac{m-m^\ell}{\beta_t-m}\right)\right)^\ell-m\left(\frac{m-m^\ell}{\beta_t-m}\right)^\ell}{m-m^\ell}=\frac{\left(\frac{\beta_t(m-m^\ell)}{\beta_t-m}\right)^\ell-m\left(\frac{m-m^\ell}{\beta_t-m}\right)^\ell}{m-m^\ell}=0.$$
Furthermore, $\Q(\beta_t)\subset\Q(H_t(\beta_t))$, so the degree of the minimal polynomial of $H_t(\beta_t)$ is at least $\ell$.
Consequently, the minimal polynomial of $H_t(\beta_t)$ is $f(X)$.\\
Let $a_i$ be the coefficient of $X^i$ in $f$. Since the minimal polynomial of $H_t(\beta_t)$ is $f(X)=\sum_{i=0}^{\ell}a_iX^i$, the minimal polynomial of $\frac{1}{\ell}H_t(\beta_t)$ is
$$\sum_{i=0}^{\ell}\frac{1}{\ell^{\ell-i}}a_iX^i.$$
In order to prove that $\frac{1}{\ell}H_t(\beta_t)$ is an algebraic integer, it is sufficient to show that $\ell^i\mid a_{\ell-i}$ for all $i\in \{0,\ldots,\ell\}$.
It is easy to see that $a_\ell=-1$ and for $i\in \{0,\ldots,\ell-1\}$,
$$a_i=\binom{\ell}{i}m^i(m-m^\ell)^{\ell-i-1}.$$
By using that for $i\in \{1,\ldots,\ell-1\}$, 
$$
v_p\left(\binom{\ell}{\ell-i}\right)=v_p(\ell)-v_p(i)
$$
and 
$$v_p(m-m^\ell)\geq v_p(m-m^p)>t,$$
it is straightforward to check, that  $\ell^i\mid a_{\ell-i}$ for $i=0,1,\ldots,\ell$, so $\frac{1}{p^t}H_t(\alpha)$ is indeed an algebraic integer.
\end{proof}

\begin{corollary}\label{skk}
We keep the notation of Lemma \ref{phatv}. If $s<k$, then the following elements are linearly independent algebraic integers over in $M$ over $\Q$:
$$\renewcommand{\arraystretch}{2.5}\left\lbrace \begin{array}{*3{>{\displaystyle}c}c>{\displaystyle}c}
\frac{h^{(r)}_0(\alpha)}{p^0},&
\alpha \cdot \frac{ h^{(r)}_0(\alpha)}{p^0},&  
\alpha^2 \cdot \frac{ h^{(r)}_0(\alpha)}{p^0},&  \ldots&,  
\alpha^{p^k-p^{k-1}-1} \cdot \frac{ h^{(r)}_0(\alpha)}{p^0},\\
\frac{h^{(r)}_1(\alpha)}{p^1},&  
\alpha \cdot \frac{ h^{(r)}_1(\alpha)}{p^1},&  
\alpha^2 \cdot \frac{ h^{(r)}_1(\alpha)}{p^1},& \ldots&,  
\alpha^{p^{k-1}-p^{k-2}-1} \cdot \frac{ h^{(r)}_1(\alpha)}{p^1},\\
\frac{h^{(r)}_2(\alpha)}{p^2},&  
\alpha \cdot \frac{ h^{(r)}_2(\alpha)}{p^2},&  
\alpha^2 \cdot \frac{ h^{(r)}_2(\alpha)}{p^2},& \ldots&,  
\alpha^{p^{k-2}-p^{k-3}-1} \cdot \frac{ h^{(r)}_2(\alpha)}{p^2},\\
&&&\vdots&\\
\frac{h^{(r)}_{s-1}(\alpha)}{p^{s-1}},&  
\alpha \cdot \frac{ h^{(r)}_{s-1}(\alpha)}{p^{s-1}},&  
\alpha^2 \cdot \frac{ h^{(r)}_{s-1}(\alpha)}{p^{s-1}},& \ldots&,  
\alpha^{p^{k-s+1}-p^{k-s}-1} \cdot \frac{ h^{(r)}_{s-1}(\alpha)}{p^{s-1}},\\
\frac{h^{(r)}_{s}(\alpha)}{p^{s}},&  
\alpha \cdot \frac{ h^{(r)}_{s}(\alpha)}{p^{s}},&  
\alpha^2 \cdot \frac{ h^{(r)}_{s}(\alpha)}{p^{s}},& \ldots&,  
\alpha^{p^{k-s}-1} \cdot \frac{ h^{(r)}_{s}(\alpha)}{p^{s}}.
\end{array}\right\rbrace $$
\end{corollary}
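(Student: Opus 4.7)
The plan is to split the verification into two independent parts: (i) every listed element lies in ${\cal O}_M$, and (ii) the resulting collection of $p^k$ elements is linearly independent over $\Q$.

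Part (i) is immediate from Lemma \ref{phatv}. Since $s<k$, we have $\min\{s,k\}=s$, so $h^{(r)}_t(\alpha)/p^t \in {\cal O}_M$ for every $t \in \{0,1,\ldots,s\}$. Multiplying by the algebraic integer $\alpha^j$ preserves integrality, so every entry of the displayed array belongs to ${\cal O}_M$.

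For part (ii), I would express the listed elements in the power basis $(1,\alpha,\alpha^2,\ldots,\alpha^{p^k-1})$ of $M$ and argue that the resulting $p^k \times p^k$ transition matrix is triangular with nonzero diagonal. By construction $h^{(r)}_t(X)$ is monic of degree $p^k-p^{k-t}$, so $\alpha^j \cdot h^{(r)}_t(\alpha)/p^t$ has leading term $(1/p^t)\alpha^{p^k-p^{k-t}+j}$. In row $t$ with $0 \le t < s$, the index $j$ runs from $0$ to $p^{k-t}-p^{k-t-1}-1$, giving leading exponents in the integer interval $[p^k-p^{k-t},\ p^k-p^{k-t-1}-1]$; in the final row $t=s$, $j$ runs from $0$ to $p^{k-s}-1$, giving the interval $[p^k-p^{k-s},\ p^k-1]$. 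A short telescoping check shows these intervals are pairwise disjoint and partition $\{0,1,\ldots,p^k-1\}$, so each exponent in this range appears exactly once as a leading exponent.

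Ordering the $p^k$ listed elements by their leading exponent therefore produces an upper triangular change-of-basis matrix whose diagonal entries are $1/p^t \neq 0$. Such a matrix is invertible over $\Q$, and the listed elements are $\Q$-linearly independent. The only point requiring attention is the combinatorial tiling of leading exponents; once that is in place the rest is routine bookkeeping, and the full triangularity argument works without ever computing the lower-order coefficients of $\alpha^j h^{(r)}_t(\alpha)$ explicitly.
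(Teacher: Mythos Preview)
Your proof is correct and follows essentially the same approach as the paper. The paper's own argument is just a terser version of yours: it notes integrality from Lemma~\ref{phatv} and multiplication by $\alpha^j$, then observes that since $\deg h^{(r)}_t(X)=p^k-p^{k-t}$ the degrees of the listed elements in $\alpha$ increase consecutively from $0$ to $p^k-1$, which is exactly your telescoping partition of leading exponents; your explicit triangular transition matrix with diagonal entries $1/p^t$ is precisely what the paper then exploits in Remark~\ref{disk}.
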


\begin{proof}
By Lemma \ref{phatv} for $t\in \{0,\ldots,\min\{s,k\}\}$, $h^{(r)}_t(\alpha)/p^t$
are algebraic integers, whence for any $j\in \N$,
$$\alpha^j\cdot \frac{h^{(r)}_t(\alpha)}{p^t}$$
are also algebraic integers. Furthermore, the degree of $h^{(r)}_t(X)$ is $p^{k}-p^{k-t}$, so the degrees of $\alpha$ in the elements of the set above are consecutively increasing by one, from 1 to $p^k-1$, which implies that these elements are linearly independent over $\Q$.
\end{proof}

\begin{corollary}\label{snk}
We keep the notation of Lemma \ref{phatv}. If $k\leq s$, then the following elements are linearly independent algebraic integers in $M$over $\Q$:
$$\renewcommand{\arraystretch}{2.5}\left\lbrace \begin{array}{*3{>{\displaystyle}c}c>{\displaystyle}c}
\frac{h^{(r)}_0(\alpha)}{p^0},&
\alpha \cdot \frac{ h^{(r)}_0(\alpha)}{p^0},&  
\alpha^2 \cdot \frac{ h^{(r)}_0(\alpha)}{p^0},&  \ldots&,  
\alpha^{p^k-p^{k-1}-1} \cdot \frac{ h^{(r)}_0(\alpha)}{p^0},\\
\frac{h^{(r)}_1(\alpha)}{p^1},&  
\alpha \cdot \frac{ h^{(r)}_1(\alpha)}{p^1},&  
\alpha^2 \cdot \frac{ h^{(r)}_1(\alpha)}{p^1},& \ldots&,  
\alpha^{p^{k-1}-p^{k-2}-1} \cdot \frac{ h^{(r)}_1(\alpha)}{p^1},\\
\frac{h^{(r)}_2(\alpha)}{p^2},&  
\alpha \cdot \frac{ h^{(r)}_2(\alpha)}{p^2},&  
\alpha^2 \cdot \frac{ h^{(r)}_2(\alpha)}{p^2},& \ldots&,  
\alpha^{p^{k-2}-p^{k-3}-1} \cdot \frac{ h^{(r)}_2(\alpha)}{p^2},\\
&&&\vdots&\\
\frac{h^{(r)}_{k-1}(\alpha)}{p^{k-1}},&  
\alpha \cdot \frac{ h^{(r)}_{k-1}(\alpha)}{p^{k-1}},&  
\alpha^2 \cdot \frac{ h^{(r)}_{k-1}(\alpha)}{p^{k-1}},& \ldots&,  
\alpha^{p^{1}-p^{0}-1} \cdot \frac{ h^{(r)}_{k-1}(\alpha)}{p^{k-1}},\\
&&\frac{h^{(r)}_{k}(\alpha)}{p^{k}}.&&
\end{array}\right\rbrace$$
\end{corollary}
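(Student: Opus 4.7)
The proof should run exactly as in Corollary \ref{skk}, the only difference being that now $\min\{s,k\}=k$, so Lemma \ref{phatv} produces algebraic integers $h_t^{(r)}(\alpha)/p^t$ for every $t$ from $0$ up to $k$ (rather than only up to $s$). Multiplying each such element by any $\alpha^j\in{\cal O}_M$ keeps it in ${\cal O}_M$, and the extra row in the list --- containing only $h_k^{(r)}(\alpha)/p^k$ with no nontrivial $\alpha^j$ factor --- is exactly the contribution of the new top level $t=k$, where the analogue of the $j$-range, $p^{k-t}-p^{k-t-1}-1$, becomes negative and forces a singleton.

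For linear independence over $\Q$, I would inspect the leading $\alpha$-degree of each element. Since $h_t^{(r)}$ is monic of degree $p^k-p^{k-t}$ in $X$, the element $\alpha^j\cdot h_t^{(r)}(\alpha)/p^t$ has leading $\alpha$-degree $j+p^k-p^{k-t}$. For each row $t\in\{0,1,\ldots,k-1\}$, as $j$ ranges over $0,1,\ldots,p^{k-t}-p^{k-t-1}-1$, this leading degree runs through the consecutive block $\bigl[p^k-p^{k-t},\,p^k-p^{k-t-1}-1\bigr]$. These blocks are pairwise disjoint and telescope to cover $\{0,1,\ldots,p^k-2\}$; the lone element $h_k^{(r)}(\alpha)/p^k$ of the final row has leading degree $p^k-1$ and supplies the missing exponent. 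Thus the $p^k$ listed elements, expressed in the $\Q$-basis $(1,\alpha,\ldots,\alpha^{p^k-1})$ of $M$, form a triangular system with nonzero diagonal entries, which gives linear independence.

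I do not foresee any real obstacle: the argument is a direct bookkeeping analogue of Corollary \ref{skk}. The only point that demands care is the endpoint arithmetic --- verifying that the $j$-range in row $t$ exactly matches the length of its target block, and that after rows $0,\ldots,k-1$ have been accounted for, the unique remaining exponent is $p^k-1$, which is precisely the leading $\alpha$-degree of the singleton in row $k$.
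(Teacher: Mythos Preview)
Your proposal is correct and follows exactly the paper's own approach: invoke Lemma \ref{phatv} with $\min\{s,k\}=k$ to get integrality of each $h^{(r)}_t(\alpha)/p^t$, then observe that the leading $\alpha$-degrees of the listed elements run through $0,1,\ldots,p^k-1$ without repetition, yielding a triangular system and hence linear independence. The paper states this in a single sentence (``Similarly to the previous corollary\ldots''); your version simply makes the block-by-block degree count explicit.
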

\begin{proof}
Similarly to the previous corollary, these elements are algebraic integers, and the degrees of $\alpha$ in these elements are consecutively increasing by one, so they are linearly independent over $\Q$.
\end{proof}

\begin{remark}\label{disk}
In Corollary \ref{skk} and \ref{snk}, we gave a basis of $M$ over $\Q$ containing algebraic integers. The transition matrix from the basis $\left( 1,\alpha,\alpha^2,\ldots,\alpha^{p^k-1}\right) $ to this new basis is triangular, and having diagonal elements of the form $\frac{1}{p^i}$. 
\begin{itemize}
\item If $s<k$ then the determinant of the basis transformation matrix is 
$$\left( \prod_{t=0}^{s-1}\left( \frac{1}{p^t}\right) ^ {p^{k-t}-p^{k-t-1}}\right) \cdot \left( \frac{1}{p^s}\right) ^ {p^{k-s}}=\left( \frac{1}{p}\right) ^{\left( \frac{p^k-p^{k-s}}{p-1}\right)} .$$
\item If $k\leq s$ then the determinant of the basis transformation matrix is 
$$\left( \prod_{t=0}^{k-1}\left( \frac{1}{p^t}\right) ^ {p^{k-t}-p^{k-t-1}}\right) \cdot \left( \frac{1}{p^k}\right)=\left( \frac{1}{p}\right) ^{\left( \frac{p^k-1}{p-1}\right)} .$$
\end{itemize}
The discriminant of the new basis can be computed as the product of the discriminant of $\left( 1,\alpha,\alpha^2,\ldots,\alpha^{p^k-1}\right) $ and the square of the determinant of the basis transformation matrix. The discriminant $D_h$ of the new basis is
\begin{itemize}
\item if $s<k$ then 
$$D_h=\frac{D(\alpha)}{\left( p^{\left( \frac{p^k-p^{k-s}}{p-1}\right)}\right)^2 }, $$
\item if $k\leq s$ then 
$$D_h=\frac{D(\alpha)}{\left( p^{\left( \frac{p^k-1}{p-1}\right)}\right)^2 } .$$
\end{itemize}
\end{remark}

\begin{lemma}\label{pell}
Let $m$ be an integer, and $p\nmid m$ be a prime. Then for any $k\in\N$
$$v_p(m^p-m)=v_p(m^{p^k}-m).$$
\end{lemma}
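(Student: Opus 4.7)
The plan is to reduce both sides to valuations of expressions of the form $u^M-1$ with $u\equiv 1\pmod p$, and then exploit a simple geometric-series factorization.

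First, since $p\nmid m$ we have $v_p(m)=0$, and therefore
$$v_p(m^p-m)=v_p(m)+v_p(m^{p-1}-1)=v_p(m^{p-1}-1),$$
and similarly $v_p(m^{p^k}-m)=v_p(m^{p^k-1}-1)$. So it suffices to prove $v_p(m^{p-1}-1)=v_p(m^{p^k-1}-1)$.

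Next, I would use the factorization $p^k-1=(p-1)N$, where $N=1+p+p^2+\dots+p^{k-1}$. Setting $u:=m^{p-1}$, we get $m^{p^k-1}-1=u^N-1$, and by Fermat's little theorem $u\equiv 1\pmod p$. Write
$$u^N-1=(u-1)\bigl(u^{N-1}+u^{N-2}+\dots+u+1\bigr).$$
Each term $u^i$ is congruent to $1$ modulo $p$, so the second factor is congruent to $N$ modulo $p$; but $N\equiv 1\pmod p$, so the second factor is a $p$-adic unit. Hence $v_p(u^N-1)=v_p(u-1)$, which is exactly what was needed.

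There is no real obstacle: the only small point to be careful about is that the argument works uniformly for $p=2$ as well (avoiding the usual LTE caveat), precisely because the exponent $N=1+p+\dots+p^{k-1}$ is automatically $\equiv 1\pmod p$ and hence coprime to $p$, so no case distinction on $p$ is needed.
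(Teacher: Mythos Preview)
Your proof is correct, and it takes a genuinely different route from the paper's.

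The paper proves the equality as two separate inequalities. For $v_p(m^{p^k}-m)\geq v_p(m^p-m)$ it argues by induction that $m^p\equiv m\pmod{p^t}$ forces $m^{p^j}\equiv m^{p^{j-1}}\pmod{p^t}$ for all $j$. For the reverse inequality it assumes $m^{p^k}\equiv m\pmod{p^t}$, so $m^{p^k-1}\equiv 1$, combines this with Euler's theorem $m^{p^{t-1}(p-1)}\equiv 1\pmod{p^t}$, and computes $\gcd(p^k-1,\,p^{t-1}(p-1))=p-1$ to conclude $m^{p-1}\equiv 1\pmod{p^t}$.

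Your argument is more direct: after stripping the factor $m$, you factor $m^{p^k-1}-1=(u-1)(u^{N-1}+\dots+1)$ with $u=m^{p-1}$ and $N=1+p+\dots+p^{k-1}$, and observe that the cofactor is $\equiv N\equiv 1\pmod p$, hence a $p$-adic unit. This gives the equality in one stroke, with no appeal to Euler's theorem and no case split between the two inequalities. It is essentially the ``lifting the exponent'' mechanism isolated to the easy case where the exponent is prime to $p$, and as you note it works uniformly for $p=2$. The paper's proof, by contrast, is phrased entirely in terms of congruences modulo $p^t$ and the structure of $(\Z/p^t\Z)^\times$; it is slightly longer but perhaps more in the spirit of the multiplicative-order arguments used elsewhere in the Newton-polygon computation.
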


\begin{proof}
It is easy to see by induction, that if $m^p\equiv m\mod{p^{t}}$, then $$m^{p^k}\equiv m^{p^{k-1}}\equiv\ldots\equiv m^{p^2}\equiv m^p\equiv m\mod{p^t},$$
so $v_p(m^{p^k}-m)\geq v_p(m^p-m)$.\\
On the other hand, if $m^{p^k}\equiv m\mod{p^t}$, then 
$$m^{p^k-1}\equiv 1\mod{p^t}$$
and by the Euler's theorem, $$m^{p^t-p^{t-1}}\equiv 1\mod{p^t}.$$
These two congruences imply that 
$$m^{\gcd(p^k-1,p^t-p^{t-1})}\equiv 1\mod{p^t}.$$
Since $\gcd(p^k-1,p^t-p^{t-1})=p-1$, we get that $m^p\equiv m\mod{p^t}$, and then $v_p(m^p-m)\geq v_p(m^{p^k}-m)$.\\
\end{proof}

\begin{theorem}\label{indp}
Let $m\neq\pm1$ be a square-free integer, $p$ be a prime, such that $p\nmid m$, and $k$ be a positive integer.
Let $f(X)=X^{p^k}-m$ and $\alpha$ be a root of $f$. Set $s:=v_p(m^p-m)-1$.
\begin{itemize}
\item If $s\leq k$, then 
$$\ind_p(\alpha)=\frac{p^k-p^{k-s}}{p-1}.$$
\item If $k<s$, then
$$\ind_p(\alpha)=\frac{p^k-1}{p-1}.$$
\end{itemize}
\end{theorem}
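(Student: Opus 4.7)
The plan is to apply Theorem \ref{Monti} to $f(X)=X^{p^k}-m$ with the choice $\phi(X)=X-m$. Since $p\nmid m$, Fermat's little theorem yields $\bar m^{p^k}=\bar m$ in $\F_p$, so $\bar f(X)=(X-\bar m)^{p^k}$ and $\phi(X)=X-m$ is the unique monic lift of the only irreducible factor of $\bar f$ modulo $p$; hence only one $\phi$-polygon needs to be analysed. Expanding $f(X)=(\phi+m)^{p^k}-m$ in powers of $\phi$ gives $a_0=m^{p^k}-m$ and $a_i=\binom{p^k}{i}m^{p^k-i}$ for $1\leq i\leq p^k$. Using $p\nmid m$, Lemma \ref{pell} supplies $u_0=v_p(m^{p^k}-m)=v_p(m^p-m)=s+1$, and Kummer's theorem gives $u_i=v_p\binom{p^k}{i}=k-v_p(i)$ for $i\geq 1$. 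The only candidate vertices for the principal polygon $N$ are therefore the ``staircase'' points $(p^j,k-j)$ for $0\leq j\leq k$ together with $(0,s+1)$.

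Next I would determine $N$ by splitting into two cases. If $s<k$, one checks that the segment from $(0,s+1)$ to $(p^{k-s},s)$ has slope $-1/p^{k-s}$ and that the staircase points $(p^j,k-j)$ with $j<k-s$ all lie strictly above it, so $N$ runs $(0,s+1)\to(p^{k-s},s)\to(p^{k-s+1},s-1)\to\cdots\to(p^k,0)$; if $k\leq s$, the first segment becomes $(0,s+1)\to(1,k)$ and is followed by the full staircase. In either case every segment has horizontal projection equal to the denominator of its slope in lowest terms (for instance, a staircase edge has projection $p^j(p-1)$ and slope $-1/(p^j(p-1))$), whence $d=\ell/e=1$ on each side. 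Thus the residual polynomial attached to each side has degree one and is automatically separable, so $f$ is $\phi$-regular and Theorem \ref{Monti} yields the exact equality $\ind_p(\alpha)=\ind_\phi(f)$.

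It remains to count the lattice points $(x,y)$ with $1\leq x\leq p^k$, $1\leq y$, lying on or below $N$. For $s\leq k$: each column $1\leq x\leq p^{k-s}$ contributes $s$ points, and for $k-s\leq j\leq k-1$ each of the $p^{j+1}-p^j$ columns in $(p^j,p^{j+1}]$ contributes $k-j-1$ points; a telescoping calculation collapses the total to $p^{k-s}(p^s-1)/(p-1)=(p^k-p^{k-s})/(p-1)$. The case $k<s$ is analogous and yields $(p^k-1)/(p-1)$. The chief subtleties I anticipate are (a) the convexity check that the intermediate non-$p$-power points $(i,k-v_p(i))$ genuinely lie above the proposed polygon (reducing to the inequality $k-v_p(i)\geq s+1-i/p^{k-s}$ in the relevant range), and (b) the telescoping algebra condensing the column-by-column sums into the closed forms stated in the theorem.
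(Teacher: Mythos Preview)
Your overall strategy coincides with the paper's: apply Theorem~\ref{Monti} with $\phi(X)=X-m$, compute the $\phi$-development via $f(X)=(\phi+m)^{p^k}-m$, identify the staircase vertices $(p^j,k-j)$ together with $(0,s+1)$, verify $\phi$-regularity, and count lattice points under $N$. The lattice-point count you sketch is fine and matches the paper's.

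There is, however, a genuine gap in your regularity argument. You assert that in the case $s<k$ the polygon runs $(0,s+1)\to(p^{k-s},s)\to(p^{k-s+1},s-1)\to\cdots$, and that each side has $d=\ell/e=1$. This fails when $p=2$: the segment $(0,s+1)\to(2^{k-s},s)$ has slope $-1/2^{k-s}$, and the next staircase segment $(2^{k-s},s)\to(2^{k-s+1},s-1)$ has slope $-1/(2^{k-s}(p-1))=-1/2^{k-s}$ as well. Hence $(2^{k-s},s)$ is \emph{not} a vertex of $N$; the first genuine side runs from $(0,s+1)$ to $(2^{k-s+1},s-1)$, with $\ell=2^{k-s+1}$, $e=2^{k-s}$, and $d=2$. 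The same collapse occurs in the case $k=s$ with $p=2$, where $(1,k)$ ceases to be a vertex. In these situations the residual polynomial attached to the first side has degree~$2$, not~$1$, so your blanket claim of automatic separability is unjustified.

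The paper handles exactly this point: for $p=2$ it identifies the degree-$2$ residual polynomial as $Y^2+Y+1\in\F_2[Y]$ (using the three lattice points $P_N$, $P_{k-s}$, $P_{k-s+1}$ on the merged side) and notes that it is irreducible, hence separable. You need to insert this computation; once done, $\phi$-regularity holds in all cases and the rest of your argument goes through unchanged. Note that the shape of $N$ (as a subset of $\R^2$) is unaffected by whether $(p^{k-s},s)$ is a vertex or an interior point of a side, so your lattice-point count remains valid; only the separability verification requires repair.
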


\begin{proof}
We intend to use Theorem \ref{Monti}. As the first step, we construct the principal Newton polygons of $f$. The factorization of $f$ modulo $p$ is 
$$\overline{f(X)}\equiv \overline{(X-m)}^{p^k}\mod{p},$$
which means that we have only one factor $\phi(X)=X-m$. Since $\phi(X)$ is of degree 1, the $\phi$-adic development of $f$ is in fact the Taylor series of $f$ at $m$:
$$f(X)=\sum_{i=0}^{p^k}\frac{f^{(i)}(m)}{i!}\phi(x)^i.$$
So the coefficients of the $\phi$-adic development of $f$ are:
$$a_0=m^{p^k}-m,$$
$$a_i=\frac{(X^{p^k}-m)^{(i)}(m)}{i!}=\binom{p^k}{i}m^{p^k-i},\quad i=1,\ldots,p^k.$$
The next step is to calculate the $p$-valuation of these coefficients. \\
For $i\in \{1,\ldots,{p^k}\}$, we recall that $m$ is coprime to $p$, so 
$$v_p(a_i)=v_p\left(\binom{p^k}{i}\right)=k-v_p(i).$$
Up to now, we have the following $p^k$ points on the plane:
$$\left\lbrace \left( i,k-v_p(i)\right):i\in \{1,\ldots,p^k\}\right\rbrace .$$
It is easy to check, that the extremal points of the lower convex hull of the points above are 
$$\left\lbrace P_j:=(p^j,k-j):j\in \{0,\ldots,k\}\right\rbrace .$$
For any $1\leq i\leq p^k$, the point $(i,k-v_p(i))$ is lying above the line segment $\overline{P_jP_{j+1}}$, where $p^j\leq i<p^{j+1}$. Furthermore, the slope of $\overline{P_jP_{j+1}}$ is $-(p^j(p-1))^{-1}$, which is increasing in $j$, so the graph created by the line segments $\left\lbrace \overline{P_jP_{j+1}},\;\;j=0,\ldots,k-1\right\rbrace $
is indeed convex, and there are no points below it. (See also T.A.Gassert \cite{maps}.)\\
We have one more point belonging to the coefficient $a_0$. The ordinate of this point is $v_p(a_0)=v_p(m^\ell-m)$, and by Lemma \ref{pell} it equals $v_p(m^p-m)=s+1$. So the initial point of the first line segment of the principal $\phi$-Newton polygon of $f$, is $P_N:=(0,s+1)$.
\begin{itemize}
\item Assume that $s<k$. If $p>2$, then the extremal points of the principal Newton polygon ordered by their abscissas are:
$$
\left\lbrace P_N,P_{k-s},P_{k-s+1},\ldots,P_k  \right\rbrace.
$$
We only have to verify that each points $P_0,\ldots,P_{k-s-1}$ are lying above the segment $\overline{P_NP_{k-s}}$ and the slope of this segment is less than the slope of the next side $\overline{P_{k-s}P_{k-s+1}}$.\\
The ordinates of the points $P_0,\ldots,P_{k-s-1}$ are greater or equal to $s+1$, and the ordinates of the points of the segment $\overline{P_NP_{k-s}}$ are between $s$ and $s+1$, so these point are indeed lying above it. The slope of $\overline{P_NP_{k-s}}$ is $(-(p^{k-s})^{-1})$, and the slope of $\overline{P_{k-s}P_{k-s+1}}$ is $(-(p^{k-s}(p-1))^{-1})$, which is greater than the previous one, since $p>2$.\\
One can see, that in this case, the degree of each residual polynomial is one, whence, they are separable.\\ 
If $p=2$, then the only difference is that the slopes of  $\overline{P_NP_{k-s}}$ and $\overline{P_{k-s}P_{k-s+1}}$ are equal, so the extremal points of the principal Newton polygon ordered by their abscissas are:
$$
\left\lbrace P_N,P_{k-s+1},P_{k-s+2},\ldots,P_k  \right\rbrace.
$$
In this case the residual polynomial attached to the first side is of degree 2, and it contains $P_N, P_{k-s}$ and $P_{k-s+1}$. Since $\phi$ is of degree one, then the polynomial ring $\left(\Z[X]/(p,\phi)\right)[Y]$ is isomorphic to $\F_p[Y]$. It is easy to determine, that the residual polynomial attached to the first side is $Y^2+Y+1$, which is separable. The residual polynomials attached to the other sides are of degree one, so they are also separable.
\item Assume that $k\leq s$. If $p>2$ or $k<s$, then the slope of $\overline{P_NP_0}$ is $k-(s+1)$, which is less than $-(p-1)^{-1}$, the slope of $\overline{P_0P_1}$, so in this case the extremal points are 
$$
\left\lbrace P_N,P_0,P_1,\ldots,P_k  \right\rbrace.
$$
Furthermore, the residual polynomials are separable, since they are of degree one.\\
If $p=2$ and $k=s$, then the slope of $\overline{P_NP_0}$ is equal the slope of $\overline{P_0P_1}$, so the extremal points of the principal Newton polygon ordered by their abscissas are
$$
\left\lbrace P_N,P_1,P_2,\ldots,P_k  \right\rbrace.
$$
In this case the residual polynomial attached to the first side is again $Y^2+Y+1$, and the others are of degree 1, so all of them are separable.
\end{itemize}
This consideration implies, that $f$ is always regular, hence we can apply Theorem \ref{Monti}, which gives
$$\ind_p(\alpha)=\ind_\phi(f).$$
The last step is to calculate the $\phi$-index of $f$. For this, we have to count the points lying strictly in the first quadrant, and below or on the principal $\phi$-Newton polygon of $f$. Each integral point in this area with ordinate $y$ can have abscissa $1\leq x\leq p^{k-y}$, which result in $p^{k-y}$ points.
\begin{itemize}
\item If $0\leq s < k$ then the ordinates of our points are less than $s+1$, so in this case there are 
$$\ind_p(\alpha)=\ind_\phi(f)=\sum_{j=1}^{s}p^{k-j}=\frac{p^k-p^{k-s}}{p-1}$$
points with integral coordinates.
\item If $k\leq s$ then the leftmost side does not exclude any integral points, so in this case there are 
$$\ind_p(\alpha)=\ind_\phi(f)=\sum_{j=1}^{k}p^{k-j}=\frac{p^k-1}{p-1}$$
points with integral coordinates.
\end{itemize}

\end{proof}
\begin{theorem}\label{fo}
Let $m\neq\pm1$ be a square-free integer, $k$ be a positive integer, $p$ be a prime, such that $p\nmid m$, $n=p^k$, and $s:=v_p(m^p-m)-1$. 
If $s<k$, then the elements listed in Corollary \ref{skk}, otherwise, the elements listed in Corollary \ref{snk} form an integral basis of $\Q(\sqrt[n]{m})$.
\end{theorem}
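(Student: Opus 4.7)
The strategy is to show that the proposed basis has discriminant equal to $D_M$, which forces it to be an integral basis. By Corollary \ref{skk} (in the case $s<k$) or Corollary \ref{snk} (in the case $k\leq s$), the listed elements already form a $\Q$-basis of $M$ consisting of algebraic integers, so they span a sublattice $L\subseteq{\cal O}_M$ whose discriminant $D_h$ satisfies $D_h=[{\cal O}_M:L]^2\cdot D_M$. Hence it is enough to establish $D_h=D_M$.

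By Remark \ref{disk}, the transition matrix from $(1,\alpha,\ldots,\alpha^{n-1})$ to the new basis is upper triangular with determinant $1/p^e$, where $e=(p^k-p^{k-s})/(p-1)$ when $s<k$ and $e=(p^k-1)/(p-1)$ when $k\leq s$; consequently $D_h=D(\alpha)/p^{2e}$. Theorem \ref{indp} identifies precisely this exponent $e$ with $\ind_p(\alpha)$ in both cases. To upgrade this to the full index, I would argue that $\ind(\alpha)=p^{\ind_p(\alpha)}$: since $D(\alpha)=\pm p^{kp^k}m^{p^k-1}$, the only primes that can appear in $\ind(\alpha)$ are $p$ and the prime divisors of $m$; but $X^{p^k}-m$ is Eisenstein at each prime $q\mid m$ (using that $m$ is square-free), so Lemma \ref{ein} gives $\ind_q(\alpha)=0$. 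Combining this with the identity $D(\alpha)=\ind(\alpha)^2\cdot D_M$ yields
$$D_h=\frac{D(\alpha)}{p^{2\ind_p(\alpha)}}=\frac{D(\alpha)}{\ind(\alpha)^2}=D_M,$$
so $[{\cal O}_M:L]=1$ and the listed elements form an integral basis.

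The bulk of the work has already been done in the preceding results: Lemma \ref{phatv} produced enough algebraic integers with denominators $p^t$ to account for the entire $p$-part of the index, while Theorem \ref{indp} computed $\ind_p(\alpha)$ exactly via Newton polygons. The only real content of the present theorem is the recognition that the $p$-power appearing in the determinant of the explicit transition matrix of Remark \ref{disk} matches, on the nose, the Newton-polygon index of Theorem \ref{indp}; this is what I would expect to check most carefully, but it reduces to a direct comparison of two closed-form expressions rather than a subtle argument.
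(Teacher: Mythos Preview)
Your proof is correct and follows essentially the same approach as the paper: compute the discriminant $D_h$ of the candidate basis via Remark \ref{disk}, match the exponent with $\ind_p(\alpha)$ from Theorem \ref{indp}, and then argue that $\ind(\alpha)=p^{\ind_p(\alpha)}$ so that $D_h=D_M$. The only cosmetic difference is that the paper invokes Theorem \ref{nes} to conclude $\ind(\alpha)$ is a $p$-power, whereas you unwind that step and cite Lemma \ref{ein} directly for the primes dividing $m$; these are the same argument at different levels of the dependency chain.
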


\begin{proof}
By Remark \ref{disk}, in Corollaries \ref{skk} and \ref{snk}, we gave a $\Q$-basis of $\Q(\sqrt[n]{m})$, such that its discriminant is equal to $p^{\ind_p(\sqrt[n]{m})}$, with $\ind_p(\sqrt[n]{m})$ given in Theorem \ref{indp}. By Theorem \ref{nes}, 
$$\ind(\sqrt[n]{m})=p^{\ind_p(\sqrt[n]{m})},$$
whence the discriminant of the basis in Corollary \ref{skk} or \ref{snk}, respectively, is equal to the discriminant of $\Q(\sqrt[n]{m})$, so it is an integral basis of $\Q(\sqrt[n]{m})$.
\end{proof}

\begin{theorem}\label{pmidm}
Let $m\neq\pm1$ be a square-free integer, $k$ be a positive integer, $p$ be a prime, such that $p\mid m$, and $n=p^k$. Then 
$$\left( 1,\sqrt[n]{m},\ldots,\sqrt[n]{m^{n-1}}\right) $$
is an integral basis of $\Q(\sqrt[n]{m})$.
\end{theorem}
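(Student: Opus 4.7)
The plan is to show that $\ind(\sqrt[n]{m})=1$, which forces the power basis to already be an integral basis. Two independent facts from the preliminaries do almost all of the work: Eisenstein's criterion (via Lemma \ref{ein}) rules out the prime $p$ from the index, while the divisibility $\ind(\sqrt[n]{m})^2 \mid (n/\gcd(n,m))^n$ established just before Corollary \ref{den} rules out every other prime.

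First I would note that since $m$ is square-free and $p \mid m$, we have $v_p(m)=1$, so the polynomial $X^{p^k}-m$ is Eisenstein at $p$. Lemma \ref{ein} then gives
$$\ind_p(\sqrt[n]{m}) = 0.$$
Next, because $p \mid m$ and $m$ is square-free, $\gcd(n,m)=\gcd(p^k,m)=p$, so $n/\gcd(n,m)=p^{k-1}$. The bound
$$\ind(\sqrt[n]{m})^2 \,\Bigl|\, \left(\tfrac{n}{\gcd(n,m)}\right)^{\!n} = p^{(k-1)n}$$
then forces $\ind(\sqrt[n]{m})$ to be a pure power of $p$.

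Combining the two observations, the only prime that could possibly contribute to $\ind(\sqrt[n]{m})$ is $p$, but its contribution is zero. Hence $\ind(\sqrt[n]{m})=1$, meaning $\bigl[{\cal O}_M:\Z[\sqrt[n]{m}]\bigr]=1$, so $\Z[\sqrt[n]{m}]={\cal O}_M$ and $(1,\sqrt[n]{m},\ldots,\sqrt[n]{m^{n-1}})$ is an integral basis.

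There is really no obstacle here; the entire argument is an immediate assembly of Lemma \ref{ein} with the discriminant divisibility derived in Section \ref{s1}. The only thing to be careful about is to record that $\gcd(p^k,m)=p$ (not $p^k$), which uses the square-freeness of $m$ in an essential way to make the exponent in $(n/\gcd(n,m))^n$ a pure power of $p$ with no other prime factors hiding inside.
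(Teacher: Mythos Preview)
Your proof is correct and follows essentially the same route as the paper: use the Eisenstein criterion (Lemma~\ref{ein}) to kill the $p$-part of the index, and use the preliminaries of Section~\ref{s1} to show no other prime can appear. The only cosmetic difference is that the paper cites Theorem~\ref{nes} for the second step while you cite the displayed divisibility $\ind(\sqrt[n]{m})^2\mid(n/\gcd(n,m))^n$ established just before Corollary~\ref{den}; these amount to the same thing.
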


\begin{proof}
By Theorem \ref{nes}, in this case $\ind(\sqrt[n]{m})=p^{\ind_p(\sqrt[n]{m})}$. Since $m$ is square-free, its minimal polynomial $x^n-m$ is $p$-Eiseinstein whence by Lemma \ref{ein}, $\ind_p(\sqrt[n]{m})=0$. This implies, that $\ind(\sqrt[n]{m})=p^{\ind_p(\sqrt[n]{m})}=1$, and therefore
$$\left( 1,\sqrt[n]{m},\ldots,\sqrt[n]{m^{n-1}}\right) $$
is an integral basis of $\Q(\sqrt[n]{m})$.
\end{proof}

\begin{theorem}\label{per}
If $k$ is a positive integer, $p$ is a prime, $n=p^k$ and $n_0=p^{k+1}$, and $m\neq\pm1$ is a square-free integer, then an integral basis of the pure fields of type $\Q(\sqrt[n]{m})$ is repeating periodically modulo $n_0$.
\end{theorem}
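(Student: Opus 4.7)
The plan is to exhibit, for each residue class $r$ modulo $n_0=p^{k+1}$, a list of polynomials $h_0^{(r)},\ldots,h_{n-1}^{(r)}\in\Q[X]$ that realizes, via evaluation at $\sqrt[n]{m}$, the integral basis of $\Q(\sqrt[n]{m})$ supplied by Theorem \ref{fo} or Theorem \ref{pmidm} for every square-free $m\equiv r\pmod{n_0}$ with $m\neq\pm1$. This is exactly the content of the periodicity condition, so it suffices to check that such a fixed, $m$-independent list exists for each $r$.

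First I split on whether $p\mid r$. If $p\mid r$, then $p\mid m$, and Theorem \ref{pmidm} gives the integral basis $\left(1,\sqrt[n]{m},\ldots,\sqrt[n]{m^{n-1}}\right)$; accordingly I simply set $h_i^{(r)}(X):=X^i$ for $i=0,\ldots,n-1$, which plainly does not depend on $m$.

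If $p\nmid r$, then $p\nmid m$ and Theorem \ref{fo} applies. The polynomials $h_t^{(r)}(X)$ introduced in Lemma \ref{phatv} are defined purely in terms of $r$, so they are already $m$-independent. The only quantity in the description of the basis that could vary with $m$ is $s:=v_p(m^p-m)-1$, which governs whether Corollary \ref{skk} or Corollary \ref{snk} is applied and how many rows appear. The key observation is that $m\equiv r\pmod{p^{k+1}}$ implies $m^p\equiv r^p\pmod{p^{k+1}}$ by the binomial theorem, hence $m^p-m\equiv r^p-r\pmod{p^{k+1}}$. Consequently $v_p(m^p-m)=v_p(r^p-r)$ whenever either side is at most $k$, and otherwise both sides are $\geq k+1$. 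In particular, the truncated quantity $s_r:=\min\bigl(v_p(r^p-r)-1,\;k\bigr)$ depends only on $r$.

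The final step is to observe that the basis furnished by Theorem \ref{fo} actually depends only on $s_r$: if $v_p(r^p-r)\leq k$ then Corollary \ref{skk} applies with its $s$ equal to $s_r$; if $v_p(r^p-r)\geq k+1$ then Corollary \ref{snk} applies, and its list involves only the polynomials $h_t^{(r)}$ for $t=0,\ldots,k$, regardless of the exact value of the true $s$. I therefore define $h_0^{(r)},\ldots,h_{n-1}^{(r)}$ to be the list from the relevant corollary, with $s_r$ in place of $s$ and $X$ in place of $\alpha$. The main subtlety to verify is that the integrality conclusion of Lemma \ref{phatv} still yields algebraic integers when $s_r$ replaces the true $s(m)$; this reduces to the inequality $s_r\leq\min(s(m),k)$, which follows directly from the congruence above (if $v_p(r^p-r)\leq k$ then $s_r=s(m)$, and if $v_p(r^p-r)\geq k+1$ then $s_r=k\leq s(m)$). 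With this in hand, Theorem \ref{fo} (or Theorem \ref{pmidm} in the divisible case) guarantees that $\bigl(h_i^{(r)}(\sqrt[n]{m})\bigr)_{i=0}^{n-1}$ is an integral basis of $\Q(\sqrt[n]{m})$ whenever $m\equiv r\pmod{n_0}$, establishing the stated periodicity.
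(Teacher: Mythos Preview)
Your proof is correct and follows essentially the same approach as the paper's own proof: both split on whether $p\mid r$, invoke Theorem~\ref{pmidm} in the divisible case and Theorem~\ref{fo} in the coprime case, and reduce periodicity to the observation that $\min\{v_p(m^p-m)-1,k\}$ depends only on $m\bmod p^{k+1}$. You spell out the congruence $m^p-m\equiv r^p-r\pmod{p^{k+1}}$ and the inequality $s_r\le\min(s(m),k)$ a bit more explicitly than the paper does, but the logic is identical.
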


\begin{proof} 
First let $m$ be an integer, such that $p\nmid m$, and we keep the notation of Lemma \ref{phatv}.\\
If $s<k$, then consider the set
$$\renewcommand{\arraystretch}{2.5}\left\lbrace \begin{array}{*3{>{\displaystyle}c}c>{\displaystyle}c}
\frac{h^{(r)}_0(X)}{p^0},&
X \cdot \frac{ h^{(r)}_0(X)}{p^0},&  
X^2 \cdot \frac{ h^{(r)}_0(X)}{p^0},&  \ldots&,  
X^{p^k-p^{k-1}-1} \cdot \frac{ h^{(r)}_0(X)}{p^0},\\
\frac{h^{(r)}_1(X)}{p^1},&  
X \cdot \frac{ h^{(r)}_1(X)}{p^1},&  
X^2 \cdot \frac{ h^{(r)}_1(X)}{p^1},& \ldots&,  
X^{p^{k-1}-p^{k-2}-1} \cdot \frac{ h^{(r)}_1(X)}{p^1},\\
\frac{h^{(r)}_2(X)}{p^2},&  
X \cdot \frac{ h^{(r)}_2(X)}{p^2},&  
X^2 \cdot \frac{ h^{(r)}_2(X)}{p^2},& \ldots&,  
X^{p^{k-2}-p^{k-3}-1} \cdot \frac{ h^{(r)}_2(X)}{p^2},\\
&&&\vdots&\\
\frac{h^{(r)}_{s-1}(X)}{p^{s-1}},&  
X \cdot \frac{ h^{(r)}_{s-1}(X)}{p^{s-1}},&  
X^2 \cdot \frac{ h^{(r)}_{s-1}(X)}{p^{s-1}},& \ldots&,  
X^{p^{k-s+1}-p^{k-s}-1} \cdot \frac{ h^{(r)}_{s-1}(X)}{p^{s-1}},\\
\frac{h^{(r)}_{s}(X)}{p^{s}},&  
X \cdot \frac{ h^{(r)}_{s}(X)}{p^{s}},&  
X^2 \cdot \frac{ h^{(r)}_{s}(X)}{p^{s}},& \ldots&,  
X^{p^{k-s}-1} \cdot \frac{ h^{(r)}_{s}(X)}{p^{s}}.
\end{array}\right\rbrace $$
If $k\leq s$, then consider the set
$$\renewcommand{\arraystretch}{2.5}\left\lbrace \begin{array}{*3{>{\displaystyle}c}c>{\displaystyle}c}
\frac{h^{(r)}_0(X)}{p^0},&
X \cdot \frac{ h^{(r)}_0(X)}{p^0},&  
X^2 \cdot \frac{ h^{(r)}_0(X)}{p^0},&  \ldots&,  
X^{p^k-p^{k-1}-1} \cdot \frac{ h^{(r)}_0(X)}{p^0},\\
\frac{h^{(r)}_1(X)}{p^1},&  
X \cdot \frac{ h^{(r)}_1(X)}{p^1},&  
X^2 \cdot \frac{ h^{(r)}_1(X)}{p^1},& \ldots&,  
X^{p^{k-1}-p^{k-2}-1} \cdot \frac{ h^{(r)}_1(X)}{p^1},\\
\frac{h^{(r)}_2(X)}{p^2},&  
X \cdot \frac{ h^{(r)}_2(X)}{p^2},&  
X^2 \cdot \frac{ h^{(r)}_2(X)}{p^2},& \ldots&,  
X^{p^{k-2}-p^{k-3}-1} \cdot \frac{ h^{(r)}_2(X)}{p^2},\\
&&&\vdots&\\
\frac{h^{(r)}_{k-1}(X)}{p^{k-1}},&  
X \cdot \frac{ h^{(r)}_{k-1}(X)}{p^{k-1}},&  
X^2 \cdot \frac{ h^{(r)}_{k-1}(X)}{p^{k-1}},& \ldots&,  
X^{p^{1}-p^{0}-1} \cdot \frac{ h^{(r)}_{k-1}(X)}{p^{k-1}},\\
&&\frac{h^{(r)}_{k}(X)}{p^{k}}.&&
\end{array}\right\rbrace $$
By Theorem \ref{fo}, if we substitute $\sqrt[n]{m}$ in place of $X$ in the appropriate set above, then we obtain an integral basis of $\Q(\sqrt[n]{m})$. It is easy to see, that $\min\{ v_p(m^p-m)-1,k\}$ depends only on the remainder of $m$ modulo $n_0$ whence the coefficients and the number of the polynomials in these sets depend also only on the remainder of $m$ modulo $n_0$.\\
If $p\mid m$, then by Theorem \ref{pmidm}, if we substitute $\sqrt[n]{m}$ in place of $X$ in 
$$(1,X,X^2,\ldots,X^{n-1}),$$
then we obtain an integral basis of $\Q(\sqrt[n]{m})$. However, $p$ divides $m$ if and only if $p$ divides the remainder of $m$ modulo $n_0$, so this set also depends only on the remainder of $m$ modulo $n_0$. This means by definition, that if $m$ is square-free, then an integral basis of $\Q(\sqrt[n]{m})$ is repeating periodically modulo $n_0$.\\
\end{proof}

\subsection{Example}\label{ex1}
As an example, we determine an integral basis of the infinite parametric family of pure number fields of type $\Q(\sqrt[9]{27t+1})$.
\begin{proposition}\label{9}
Let $t$ be a nonzero integer, and $\alpha$ be a root of $X^{9}-(27t+1)$. If $27t+1$ is square-free, then by substituting $\alpha$ in place of $X$ in 
$$\left(
1,X,X^2,X^3,X^4,X^5,
\frac{1+X^3+X^6}{3},\frac{X+X^4+X^7}{3},
\frac{1+X+X^2+X^3+X^4+X^5+X^6+X^7+X^8}{9}
\right),
$$
then we obtain an integral basis of $\Q(\alpha)$.
\end{proposition}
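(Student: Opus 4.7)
The plan is to specialize Theorem \ref{fo} to $n=9=3^2$ (so $p=3$, $k=2$) with $m=27t+1$, and then to check that the basis it produces matches the tuple in the proposition. Since $m\equiv 1\pmod 3$ we have $3\nmid m$, so Theorem \ref{fo} applies rather than Theorem \ref{pmidm}. The relevant modulus is $n_0=p^{k+1}=27$, and since $27t+1\equiv 1\pmod{27}$ the residue in the notation of Lemma \ref{phatv} is $r=1$.

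Next I would determine which of the two branches of Theorem \ref{fo} applies by bounding $s:=v_3(m^3-m)-1$. Factoring $m^3-m=m(m-1)(m+1)$ and using $v_3(m)=v_3(m+1)=0$ together with $v_3(m-1)=v_3(27t)\ge 3$ gives $s\ge 2=k$. Hence we are in the case $k\le s$, and Theorem \ref{fo} tells us to read off the integral basis from Corollary \ref{snk}.

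The remaining work is to instantiate the formula $h^{(r)}_t(X)=(X^{p^k}-r^{p^t})/(X^{p^{k-t}}-r)$ at $r=1$, $p=3$, $k=2$, obtaining $h^{(1)}_0=1$, $h^{(1)}_1=X^6+X^3+1$, and $h^{(1)}_2=X^8+X^7+\cdots+X+1$, and then to list the rows of Corollary \ref{snk}. The $t=0$ row contributes $1,\alpha,\alpha^2,\alpha^3,\alpha^4,\alpha^5$ (six elements, since $p^k-p^{k-1}-1=5$); the $t=1$ row contributes $(1+\alpha^3+\alpha^6)/3$ and $\alpha\cdot(1+\alpha^3+\alpha^6)/3=(\alpha+\alpha^4+\alpha^7)/3$ (two elements, since $p^{k-1}-p^{k-2}-1=1$); and the $t=k=2$ row contributes the single element $(1+\alpha+\cdots+\alpha^8)/9$. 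These nine elements are precisely the tuple in the statement.

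The only substantive step is the valuation calculation confirming $s\ge k$, which is exactly the point at which the special form $m=27t+1$ (as opposed to $m\equiv 1$ modulo a smaller power of $3$) is used; the hypothesis $t\ne 0$ just ensures $m\ne 1$ so that the hypotheses of Theorem \ref{fo} are met. Once the inequality $s\ge k$ is established, the rest is a direct, bookkeeping-level application of the general machinery of Section \ref{s2}, and I do not expect any further obstacle.
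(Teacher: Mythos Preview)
Your proposal is correct and follows essentially the same route as the paper's own proof: both verify that $s=v_3(m^3-m)-1\ge 2=k$ (the paper by expanding $(27t+1)^3-(27t+1)$, you by factoring $m(m-1)(m+1)$), then invoke Theorem~\ref{fo} in the case $k\le s$ and read off the basis from Corollary~\ref{snk} with $r=1$. The only cosmetic difference is that the paper records $h^{(1)}_1$ as $X^2+X+1$ and then evaluates at $\alpha^3$, whereas you (consistently with the definition in Lemma~\ref{phatv}) write $h^{(1)}_1(X)=X^6+X^3+1$ and evaluate at $\alpha$; the resulting basis element is the same.
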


\begin{proof}
Since $v_3((27t+1)^3-(27t+1))=v_3(19683t^3+2187t^2+54t)\geq3$, then by Theorem \ref{fo}, 
$$
\left(
h^{(1)}_0(\alpha),\alpha\cdot h^{(1)}_0(\alpha),\alpha^2\cdot h^{(1)}_0(\alpha),\alpha^3\cdot h^{(1)}_0(\alpha),\alpha^4\cdot h^{(1)}_0(\alpha),\alpha^5\cdot h^{(1)}_0(\alpha),
\frac{h^{(1)}_1(\alpha^3)}{3},\frac{\alpha\cdot h^{(1)}_1(\alpha^3)}{3},
\frac{h^{(1)}_2(\alpha)}{9}
\right)
$$
is an integral basis of $\Q(\alpha)$, where
$$h^{(1)}_0(X)=1,$$
$$h^{(1)}_1(X)=X^2+X+1,$$
$$h^{(1)}_2(X)=X^8+X^7+X^6+X^5+X^4+X^3+X^2+X+1.$$

It can also be proved using Theorem \ref{per}, without calculating the $h^{(1)}_i$ polynomials. By this corollary, if $m$ is square-free, then an integral basis of the fields $\Q(\sqrt[9]{m})$ is repeating periodically modulo 27. Now calculate with any computer algebra system an integral basis of $\Q\left( \sqrt[9]{-26}\right) $. We chose $-26$, since it is square-free and has remainder 1 modulo 27. Afterwards, represent the elements of this integral basis in the $\Q$-basis $\left( 1,\sqrt[9]{-26},\ldots,\sqrt[9]{(-26)^8}\right) $. (Most of the computer algebra systems calculate an integral basis of this form, usually based on the Round 4 algorithm). \\
Finally for any square-free values of $m$ having remainder 1 modulo 27, we obtain an integral basis of $\Q(\sqrt[9]{m})$, by substituting $\sqrt[9]{m}$ in place of $\sqrt[9]{-26}$ in this basis.
\end{proof}

\newpage
\section{An integral basis of $\Q(\sqrt[n]{m})$, where $n$ is composite}\label{s3}
\begin{lemma}\label{comp}
Let $m\neq\pm1$ be a square-free integer, $2\leq n_1,n_2$ be coprime integers and $n=n_1\cdot n_2$. Then 
$$
\ind(\sqrt[n]{m})=\left( \ind(\sqrt[n_1]{m})\right) ^{n_2}\cdot \left( \ind(\sqrt[n_2]{m})\right) ^{n_1}.
$$
\end{lemma}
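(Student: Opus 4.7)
The plan is to establish the identity prime by prime, using Theorem~\ref{nes} to restrict the primes that can appear and the discriminant--index relation $D(\alpha)=\ind(\alpha)^2 D_{\Q(\alpha)}$ together with the classical tower formula for the discriminants of $\Q\subset M_1\subset M$, where $M_1=\Q(\sqrt[n_1]{m})$ and $M=\Q(\sqrt[n]{m})$. Since $\gcd(n_1,n_2)=1$, every prime $p\mid n$ divides exactly one of $n_1$ or $n_2$. By Theorem~\ref{nes}, $\ind(\sqrt[n_i]{m})$ is supported on primes dividing $n_i$, so the desired product decomposition is equivalent to the two symmetric claims
$$\ind_p(\sqrt[n]{m})=n_2\,\ind_p(\sqrt[n_1]{m})\quad(p\mid n_1),\qquad \ind_p(\sqrt[n]{m})=n_1\,\ind_p(\sqrt[n_2]{m})\quad(p\mid n_2).$$

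By symmetry it is enough to handle $p\mid n_1$. If $p\mid m$ as well, then each of $X^n-m$, $X^{n_1}-m$, $X^{n_2}-m$ is Eisenstein at $p$, so Lemma~\ref{ein} forces the three $p$-indices to vanish and the identity is trivial. Otherwise $p\nmid m$, and I would work in the tower $\Q\subset M_1\subset M=M_1(\sqrt[n_2]{m})$: the minimal polynomial of $\sqrt[n_2]{m}$ over $M_1$ is $X^{n_2}-m$ (its degree equals $[M:M_1]=n_2$), whose polynomial discriminant $\pm n_2^{n_2}m^{n_2-1}$ is a $p$-adic unit because $p\nmid n_2 m$. Since the relative discriminant $\mathfrak{d}_{M/M_1}$ divides this polynomial discriminant as an ideal of $\mathcal{O}_{M_1}$, the extension $M/M_1$ is unramified at every prime of $\mathcal{O}_{M_1}$ above $p$, and the standard tower formula
$$|D_M|=N_{M_1/\Q}(\mathfrak{d}_{M/M_1})\cdot |D_{M_1}|^{n_2}$$
yields $v_p(D_M)=n_2\,v_p(D_{M_1})$.

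Next I would plug the formula $D(\sqrt[\ell]{m})=\pm\ell^\ell m^{\ell-1}$ together with $v_p(m)=0$ and $v_p(n)=v_p(n_1)$ into $D(\alpha)=\ind(\alpha)^2 D_{\Q(\alpha)}$ for $\alpha=\sqrt[n]{m}$ and $\alpha=\sqrt[n_1]{m}$. This gives
$$2\ind_p(\sqrt[n]{m})=n\,v_p(n_1)-n_2\,v_p(D_{M_1})=n_2\bigl(n_1\,v_p(n_1)-v_p(D_{M_1})\bigr)=2n_2\,\ind_p(\sqrt[n_1]{m}),$$
using $n=n_1n_2$. Taking the product over all primes $p\mid n$ and grouping them according to whether $p\mid n_1$ or $p\mid n_2$ then assembles the factorization $\ind(\sqrt[n]{m})=\ind(\sqrt[n_1]{m})^{n_2}\cdot\ind(\sqrt[n_2]{m})^{n_1}$.

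I expect the delicate step to be the unramifiedness of $M/M_1$ above $p\mid n_1$, $p\nmid m$: the argument above rests on the classical fact that the polynomial discriminant of a monic integral generator bounds the relative discriminant ideal, combined with the tower formula for absolute discriminants. Once those two facts are granted, the rest of the proof reduces to bookkeeping with $p$-adic valuations.
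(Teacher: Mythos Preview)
Your argument is correct and takes a genuinely different route from the paper. Both proofs invoke the tower formula $D_M=N_{M_1/\Q}(\mathfrak{d}_{M/M_1})\cdot D_{M_1}^{[M:M_1]}$, but the paper uses it only to obtain the divisibility $D_{M_1}^{n_2}\mid D_M$ (and symmetrically $D_{M_2}^{n_1}\mid D_M$), and then closes the argument from the other side by constructing the composite basis $\{\psi_i\omega_j\}$ of the two subfields, whose discriminant $D_{M_1}^{n_2}D_{M_2}^{n_1}$ is divisible by $D_M$; combining the two divisibilities with coprimality yields equality. You instead sharpen the tower formula to an exact statement $v_p(D_M)=n_2\,v_p(D_{M_1})$ for each $p\mid n_1$, by observing that the generator $\alpha^{n_1}=\sqrt[n_2]{m}$ of $M/M_1$ has polynomial discriminant $\pm n_2^{n_2}m^{n_2-1}$, a $p$-adic unit, so $\mathfrak{d}_{M/M_1}$ contributes nothing at $p$. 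This local approach avoids the composite-basis construction entirely and is arguably more direct; the paper's global approach, on the other hand, keeps everything at the level of elementary divisibilities and never needs to invoke the relative different or the bound $\mathfrak{d}_{M/M_1}\mid\mathrm{disc}(X^{n_2}-m)\mathcal{O}_{M_1}$. Two small points worth making explicit in your write-up: the equality $M=M_1(\alpha^{n_1})$ (hence $[M_1(\alpha^{n_1}):M_1]=n_2$) follows from B\'ezout $an_1+bn_2=1$, and the reduction to primes $p\mid n$ requires Theorem~\ref{nes} applied to $n$ as well as to $n_1,n_2$.
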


\begin{proof}
Let
$$M=\Q(\sqrt[n]{m}),\qquad M_1=\Q(\sqrt[n_1]{m}), \qquad M_2=\Q(\sqrt[n_2]{m}),$$
and let $D_M$, $D_{M_1}$ and $D_{M_2}$ denote their discriminants respectively.\\
By Corollary \ref{den}, $\ind(\sqrt[n]{m})$, $\ind(\sqrt[n_1]{m})$ and $\ind(\sqrt[n_2]{m})$ are coprime to $m$. \\
It is well known (\cite{Nark} Chapter IV., Proposition 4.15), that for any tower of fields $M/L/\Q$:
$$D_M=N_{L/\Q}(D_{M/L})\cdot D_L^{[M:L]}.$$
Applying this with $L=M_1$ we get that $\left( D_{M_1}\right) ^{n_2}\mid D_M $, and with $L=M_2$ we get that $ \left( D_{M_2}\right) ^{n_1}\mid D_M$. So
$$
\lcm\left( \left( D_{M_1}\right) ^{n_2},\left( D_{M_2}\right) ^{n_1}\right) \mid D_M.
$$
Using that
$$D_{M}=\frac{D(\sqrt[n]{m})}{\ind\left( \sqrt[n]{m}\right)^2 }=\frac{(-1)^{\frac{n(n-1)}{2}}\cdot n^n\cdot m^{n-1}}{\ind\left( \sqrt[n]{m}\right)^2 },$$
$$
D_{M_1}=\frac{D(\sqrt[n_1]{m})}{\ind\left( \sqrt[n_1]{m}\right)^2 }=\frac{(-1)^{\frac{n_1(n_1-1)}{2}}\cdot n_1^{n_1}\cdot m^{n_1-1}}{\ind\left( \sqrt[n_1]{m}\right)^2 },$$
$$D_{M_2}=\frac{D(\sqrt[n_2]{m})}{\ind\left( \sqrt[n_2]{m}\right)^2 }=\frac{(-1)^{\frac{n_2(n_2-1)}{2}}\cdot n_2^{n_2}\cdot m^{n_2-1}}{\ind\left( \sqrt[n_2]{m}\right)^2 }.
$$
together with $\gcd(n_1,n_2)=1$, and $\gcd\left( \ind\left( \sqrt[n]{m}\right)\cdot\ind\left( \sqrt[n_1]{m}\right)\cdot\ind\left( \sqrt[n_2]{m}\right),m\right)=1$, we obtain
$$
\left. \lcm\left( \left( \frac{n_1^{n_1}}{\ind\left( \sqrt[n_1]{m}\right)^2 }\right) ^{n_2},\left( \frac{n_2^{n_2}}{\ind\left( \sqrt[n_2]{m}\right)^2 }\right) ^{n_1}\right) = \frac{n^n}{\left( \ind\left( \sqrt[n_1]{m}\right)^2 \right) ^{n_2}\cdot \left( \ind\left( \sqrt[n_2]{m}\right)^2 \right) ^{n_1} }\;\;\right|\;\; \frac{n^n}{\ind\left( \sqrt[n]{m}\right)^2 }.
$$
On the other hand, consider the composite basis of the integral bases of the fields $M_1$ and $M_2$. Let $\{1,\psi_2,\ldots,\psi_{n_1}\}$ be an integral basis of $M_1$ and 
$\{1,\omega_2,\ldots,\omega_{n_2}\}$ be an integral basis of $M_2$. Then 
$$\{1,\psi_2,\ldots,\psi_{n_1},\omega_2,\psi_2\omega_2,\ldots,\psi_{n_1}\omega_2,\ldots\ldots,\omega_{n_2},\psi_2\omega_{n_2},\ldots,\psi_{n_1}\omega_{n_2}\}$$
is a $\Q$-basis for $M$, because these $n_1\cdot n_2=n$ numbers are independent over $\Q$ due to $\gcd(n_1,n_2)=1$.\\
The discriminant of this basis containing algebraic integers is $D_{M_1}^{n_2}\cdot D_{M_2}^{n_1}$. Since $D_M$ divides the discriminant of any $\Q$-basis of $K$ containing algebraic integers, we have
$$D_M\mid D_{M_1}^{n_2}\cdot D_{M_2}^{n_1}.$$
Using the coprimality of the indices to $m$ again, this implies:
$$
\left. \frac{n^n}{\ind\left( \sqrt[n]{m}\right)^2 } \;\;\right|\;\;  \frac{n^n}{\left( \ind\left( \sqrt[n_1]{m}\right)^2 \right) ^{n_2}\cdot \left( \ind\left( \sqrt[n_2]{m}\right)^2 \right) ^{n_1} }.
$$
So we have:
$$\frac{n^n}{\ind\left( \sqrt[n]{m}\right)^2 }=\frac{n^n}{\left( \ind\left( \sqrt[n_1]{m}\right)^2 \right) ^{n_2}\cdot \left( \ind\left( \sqrt[n_2]{m}\right)^2 \right) ^{n_1} },$$
which implies that 
$$
\ind(\sqrt[n]{m})=\left( \ind(\sqrt[n_1]{m})\right) ^{n_2}\cdot \left( \ind(\sqrt[n_2]{m})\right) ^{n_1}.
$$
\end{proof}

Using this Lemma, we can construct an integral basis of $\Q(\sqrt[n]{m})$.\\
Let $m\neq\pm1$ be a square-free integer, $2\leq n_1,n_2$ be coprime integers, $n=n_1\cdot n_2$, 
$$\alpha=\sqrt[n]{m},\qquad\alpha_1=\sqrt[n_1]{m},\qquad\alpha_2=\sqrt[n_2]{m},$$
$$M=\Q(\alpha),\qquad M_1=\Q(\alpha_1),\qquad M_2=\Q(\alpha_2).$$
Let $\psi_0\equiv1,\psi_1,\ldots,\psi_{n_1-1}\in\Q[X]$ and $\omega_0\equiv1,\omega_1,\ldots,\omega_{n_2-1}\in\Q[X]$ be polynomials such that $\deg(\psi_i)=i$, $\deg(\omega_i)=i$, and
$$
(\psi_0(\alpha_1),\psi_1(\alpha_1),\ldots,\psi_{n_1-1}(\alpha_1))
$$ 
is an integral basis of $M_1$, and
$$
(\omega_0(\alpha_2),\omega_1(\alpha_2),\ldots,\omega_{n_2-1}(\alpha_2))
$$
is an integral basis of $M_2$. \\
Extend these polynomials in the following way:
$$
\Psi_{i,j}(X):=X^j\cdot \psi_i(X^{n_2}),\qquad \left( i=0,\ldots, n_1-1, \;\; j=0,\ldots,n_2-1\right) ,
$$
$$
\Omega_{i,j}(X):=X^j\cdot \omega_i(X^{n_1}),\qquad \left( i=0,\ldots, n_2-1, \;\; j=0,\ldots,n_1-1\right) .
$$
For any $k\in\{0,\ldots,n-1\}$ there exist uniquely determined $a,d\in \{0,\ldots, n_1-1\}$ and $b,c\in \{0,\ldots, n_2-1\}$, such that 
$$\deg(\Psi_{a,b})=\deg(\Omega_{c,d})=k.$$
Let $u$ be the denominator of $\Psi_{a,b}$, $v$ be the denominator of $\Omega_{c,d}$.\\
(The denominator $c$ of a rational polynomial $p(X)\in\Q[X]$ is the smallest positive integer, such that $c\cdot p(X)\in\Z[X]$.)\\
For $k\in\{0,\ldots,n-1\}$ let $\gamma_k\in\Q[X]$ be a polynomial of degree $k$, such that
$${v\cdot \gamma_k-\Psi_{a,b}}\in\Z[X],$$
$${u\cdot \gamma_k-\Omega_{c,d}}\in \Z[X].$$

\begin{theorem}\label{n1n2}
With the notation above, $(\gamma_0(\alpha),\gamma_1(\alpha),\ldots,\gamma_{n-1}(\alpha))$ is an integral basis of $M$.
\end{theorem}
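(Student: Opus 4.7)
The plan is to establish three things: each $\gamma_k(\alpha)\in\mathcal{O}_M$, the tuple $(\gamma_0(\alpha),\ldots,\gamma_{n-1}(\alpha))$ is $\Q$-linearly independent, and its discriminant equals $D_M$. Combined, these make it an integral basis of $M$.

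For the first, I observe that $\Psi_{a,b}(\alpha)=\alpha^b\psi_a(\alpha_1)$ and $\Omega_{c,d}(\alpha)=\alpha^d\omega_c(\alpha_2)$ are products of algebraic integers, hence lie in $\mathcal{O}_M$. The defining conditions $v\gamma_k-\Psi_{a,b}\in\Z[X]$ and $u\gamma_k-\Omega_{c,d}\in\Z[X]$ then place $v\gamma_k(\alpha)$ and $u\gamma_k(\alpha)$ inside $\mathcal{O}_M$. Now $u$ divides $\ind(\alpha_1)$ and $v$ divides $\ind(\alpha_2)$; by Corollary \ref{den} applied to $M_1$ and $M_2$, the prime supports of $\ind(\alpha_1)$ and $\ind(\alpha_2)$ lie in those of $n_1$ and $n_2$ respectively, so $\gcd(u,v)=1$ follows from $\gcd(n_1,n_2)=1$. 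Bezout then produces integers $e,f$ with $eu+fv=1$, giving $\gamma_k(\alpha)=eu\gamma_k(\alpha)+fv\gamma_k(\alpha)\in\mathcal{O}_M$. The $\Q$-linear independence is immediate from $\deg\gamma_k=k$: the change-of-basis matrix $T$ from $(1,\alpha,\ldots,\alpha^{n-1})$ to $(\gamma_k(\alpha))$ is lower triangular with nonzero diagonal entries $d_k:=\mathrm{lc}(\gamma_k)$.

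The heart of the proof is the discriminant computation. The new basis has discriminant $(\det T)^2 D(\alpha)=\bigl(\prod_k d_k\bigr)^2 D(\alpha)$, and this coincides with $D_M=D(\alpha)/\ind(\alpha)^2$ exactly when $|\prod_k d_k|=1/\ind(\alpha)$. Lemma \ref{comp} turns the target into $1/\bigl(\ind(\alpha_1)^{n_2}\ind(\alpha_2)^{n_1}\bigr)$. I will extract each $d_k$ from the two leading-coefficient congruences $v_k d_k\equiv\mathrm{lc}(\psi_{a(k)})\pmod{\Z}$ and $u_k d_k\equiv\mathrm{lc}(\omega_{c(k)})\pmod{\Z}$: combined with $\gcd(u_k,v_k)=1$ these determine $d_k$ uniquely modulo $\Z$ by CRT, and the canonical representative will have denominator exactly $k_{a(k)}l_{c(k)}$ with numerator $\pm 1$, where $k_a,l_c$ are the denominators of $\mathrm{lc}(\psi_a)$ and $\mathrm{lc}(\omega_c)$. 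Integral-basis leading coefficients must have the form $\pm 1/k_a$ so that the inverse of the lower-triangular change-of-basis from $(\alpha_1^j)$ to $(\psi_a(\alpha_1))$ has integer entries; hence $\prod_a k_a=\ind(\alpha_1)$ and $\prod_c l_c=\ind(\alpha_2)$. Since each $a$ is attained by $a(k)$ exactly $n_2$ times and each $c$ by $c(k)$ exactly $n_1$ times as $k$ runs over $\{0,\ldots,n-1\}$, the product telescopes to $\prod_k k_{a(k)}l_{c(k)}=\ind(\alpha_1)^{n_2}\ind(\alpha_2)^{n_1}=\ind(\alpha)$, which would finish the proof.

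The main obstacle will be the denominator analysis for $d_k$: one must verify that among the many $\gamma_k\in\Q[X]$ meeting both defining congruences, the canonical one --- whose leading coefficient has denominator exactly $k_{a(k)}l_{c(k)}$ with numerator $\pm 1$ --- truly satisfies both congruences and produces the correct product $|\prod_k d_k|=1/\ind(\alpha)$. This will rest on the coprimality $\gcd(u_k,v_k)=1$ from $\gcd(n_1,n_2)=1$, and on the rigid shape $\pm 1/(\text{positive integer})$ forced on leading coefficients of integral-basis polynomials.
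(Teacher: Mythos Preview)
Your integrality step for $\gamma_k(\alpha)$ via B\'ezout matches the paper. For the discriminant you take a genuinely different route: the paper never computes $\prod_k d_k$, but instead argues that each $\Psi_{i,j}(\alpha)$ and each $\Omega_{i,j}(\alpha)$ lies in $\bigoplus_k\Z\,\gamma_k(\alpha)$ (from $\Psi_{a,b}=v\gamma_k+q$ with $q\in\Z[X]$, and the analogous relation for $\Omega$), whence $D_\gamma\mid D_\Psi$ and $D_\gamma\mid D_\Omega$, and then $D_\gamma\mid\gcd(D_\Psi,D_\Omega)=D_M$ by Lemma~\ref{comp}. No explicit control of the numerators of the $d_k$ is invoked there.

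In your direct computation the gap is the assertion that the canonical $d_k$ has numerator $\pm 1$. From $v\,d_k\equiv \epsilon_1/k_a$ and $u\,d_k\equiv \epsilon_2/l_c\pmod{\Z}$ (with $\epsilon_i$ the signs of $\mathrm{lc}(\psi_a),\mathrm{lc}(\omega_c)$) one does obtain that $d_k$ has denominator exactly $k_a l_c$, but CRT then pins the numerator to the residue class $N_k\equiv \epsilon_1\,(v/l_c)^{-1}\pmod{k_a}$ and $N_k\equiv \epsilon_2\,(u/k_a)^{-1}\pmod{l_c}$, which need not be $\pm 1$. Already with $k_a=3$, $l_c=5$, $u=k_a$, $v=l_c$ and opposite signs $\epsilon_1=-\epsilon_2$ one gets $N_k\equiv -4\pmod{15}$, so \emph{no} admissible $\gamma_k$ has $|d_k|=1/15$ and your product cannot equal $1/\ind(\alpha)$. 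Your appeal to the ``rigid shape $\pm 1/(\text{integer})$'' of integral-basis leading coefficients is circular at this point: that shape is forced on $d_k$ only once $(\gamma_k(\alpha))$ is already known to be an integral basis, which is precisely the goal. The computation does go through if one restricts to the explicit prime-power bases of Corollaries~\ref{skk}--\ref{snk} (there $u=k_a$, $v=l_c$, and all leading coefficients are positive), which is effectively how the paper's inductive proof is set up; but for the arbitrary triangular $\psi_i,\omega_j$ allowed in the theorem's hypotheses, your product formula for $\prod_k d_k$ is not justified.
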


\begin{proof}
We will use induction by the number of prime divisors of $n$. First let $n_1=p_1^{k_1}$ and $n_2=p_2^{k_2}$. 
By Corollary \ref{per} we can choose polynomials $\psi_i$, $\omega_j\in \Q[X]$, such that $\deg(\psi_i)=i$, $\deg(\omega_j)=j$, and
$$
(\psi_0(\alpha_1),\psi_1(\alpha_1),\ldots,\psi_{n_1-1}(\alpha_1))
$$ 
is an integral basis of $M_1$, and
$$
(\omega_0(\alpha_2),\omega_1(\alpha_2),\ldots,\omega_{n_2-1}(\alpha_2))
$$
is an integral basis of $M_2$. \\
By definition, we have:
$$\deg(\Psi_{i,j})=j+i\cdot n_2,\qquad
\left( i=0,\ldots n_1-1, \;\; j=0,\ldots,n_2-1\right),$$
$$\deg(\Omega_{i,j})=j+i\cdot n_1, \qquad \left( i=0,\ldots n_2-1, \;\; j=0,\ldots,n_1-1\right).$$
So the degrees of $\Psi_{i,j}$-s, and similarly, the degrees of $\Omega_{i,j}$-s are distinct numbers from the set $\{0,1,\ldots,n-1\}$ whence
$$
\Psi_{0,0}(\alpha)=1,\Psi_{0,1}(\alpha),\Psi_{0,2}(\alpha),\ldots,\Psi_{n_1-1,n_2-1}(\alpha)\in\Q(\alpha)
$$ 
are linearly independent elements over $\Q$, so they form a $\Q$-basis for $M$. Furthermore, by the construction of the $\Psi_{i,j}$-s, these elements are algebraic integers, which implies, that $D_M$ divides the discriminant of this basis.\\
From the definition of $\Psi_{i,j}$, it is very easy to determine the discriminant $D_\Psi$ of the basis
$$
(\Psi_{0,0}(\alpha)=1,\Psi_{0,1}(\alpha),\Psi_{0,2}(\alpha),\ldots,\Psi_{n_1-1,n_2-1}(\alpha)),
$$
because the product of the denominators of the elements of this basis written in the $\Q$-basis $(1,\alpha,\ldots,\alpha^{n-1})$ is equal to the $n_2$-th power of the product of the denominators of $\psi_i$-s, which is exactly the index of $\alpha_1$. So we have
$$D_\Psi=D(\Psi_{0,0}(\alpha),\Psi_{0,1}(\alpha),\Psi_{0,2}(\alpha),\ldots,\Psi_{n_1-1,n_2-1}(\alpha))=\frac{D(\alpha)}{\left( \ind\left( \alpha_1\right)^2 \right) ^{n_2}}.$$
We have the same with the $\Omega_{i,j}$-s:
$$D_\Omega=\frac{D(\alpha)}{\left( \ind\left( \alpha_2\right)^2 \right) ^{n_1}}.
$$
By using the Chinese Remainder Theorem it is easy to find such polynomials $\gamma_k$, for which 
$${v\cdot \gamma_k-\Psi_{a,b}}\in\Z[X],$$
$${u\cdot \gamma_k-\Omega_{c,d}}\in \Z[X].$$
Since $u$ is the denominator of $\Psi_{a,b}$, it divides $\ind(\alpha_1)$, and similarly, $v$ divides $\ind(\alpha_2)$. These indices are coprime, so there exist $s,t\in\Z$, such that
$$su+tv=1,$$
whence
$s(v\cdot \gamma_k-\Psi_{a,b})+t({u\cdot \gamma_k-\Omega_{c,d}})=\gamma_k-s\cdot\Psi_{a,b}-t\cdot\Omega_{c,d}\in\Z[X]$.\\
This implies, that the $\gamma_k(\alpha)$-s are algebraic integers, furthermore, by their definitions, we can write every elements $\Psi_{i,j}(\alpha)$, and $\Omega_{i,j}(\alpha)$  in the basis $(1,\gamma_1(\alpha),\ldots,\gamma_{n-1}(\alpha))$ with integer coefficients. This means, that the discriminant $D_\gamma$ of $(1,\gamma_1(\alpha),\ldots,\gamma_{n-1}(\alpha))$ divides $D_\Psi$ and $D_\Omega$ and therefore
$$D_\gamma\mid \gcd(D_\Psi,D_\Omega).$$
Since $\gcd(\ind(\alpha_1),\ind(\alpha_2))=1$, by Lemma \ref{comp} we get
$$\gcd(D_\Psi,D_\Omega)=\frac{D(\alpha)}{\left( \ind\left( \alpha_1\right)^2 \right) ^{n_2}\cdot \left( \ind\left( \alpha_2\right)^2 \right) ^{n_1} }=\frac{D(\alpha)}{\ind\left( \alpha\right)^2 }=D_M,
$$
whence the discriminant of the basis $(1,\gamma_1(\alpha),\ldots,\gamma_{n-1}(\alpha))$ containing algebraic integers, divides the discriminant of $M$, which implies that it is an integral basis of $M$.\\
Notice, that by definition $\deg(\gamma_k)=k$, so we can repeat this process with $n_1=p_1^{k_1}p_2^{k_2}$ and $n_2=p_3^{k_3}$ to prove the theorem for any $n$ having at most three distinct prime divisor. Then by induction the statement of the theorem comes true for any $n\in \N$. 
\end{proof}

\begin{theorem}\label{per2}
Let $n\geq2$ with prime decomposition 
$$n=p_1^{k_1}\cdot p_2^{k_2}\cdot\ldots\cdot p_j^{k_j}$$
and 
$$n_0:=p_1^{k_1+1}\cdot p_2^{k_2+1}\cdot\ldots\cdot p_j^{k_j+1}.$$
If $m\neq\pm1$ is a square-free integer, then an integral basis of the pure fields of type $\Q(\sqrt[n]{m})$ is repeating periodically modulo $n_0$.
\end{theorem}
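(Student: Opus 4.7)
The plan is to induct on the number $j$ of distinct prime divisors of $n$. The base case $j=1$, where $n=p^k$, is exactly Theorem \ref{per}, which already supplies the period $n_0=p^{k+1}$. For the inductive step, I would factor $n=n_1\cdot n_2$ with $n_1=p_1^{k_1}\cdots p_{j-1}^{k_{j-1}}$ and $n_2=p_j^{k_j}$, so that $\gcd(n_1,n_2)=1$. By the induction hypothesis, for each residue $r_1$ modulo $n_0^{(1)}:=\prod_{i<j}p_i^{k_i+1}$ there are polynomials $\psi_i^{(r_1)}(X)\in\Q[X]$ with $\deg\psi_i^{(r_1)}=i$ whose values at $\sqrt[n_1]{m}$ form an integral basis of $\Q(\sqrt[n_1]{m})$ whenever $m\equiv r_1\pmod{n_0^{(1)}}$; by Theorem \ref{per}, analogous polynomials $\omega_i^{(r_2)}(X)$ exist for $\Q(\sqrt[n_2]{m})$ with period $n_0^{(2)}:=p_j^{k_j+1}$.

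Next I would feed these into the construction of Theorem \ref{n1n2}. Since $\gcd(n_0^{(1)},n_0^{(2)})=1$ and $n_0=n_0^{(1)}\cdot n_0^{(2)}$, the Chinese Remainder Theorem ensures that the class $r$ of $m$ modulo $n_0$ determines both $r_1$ and $r_2$. Consequently the polynomials $\Psi_{a,b}^{(r)}(X)=X^{b}\cdot\psi_{a}^{(r_1)}(X^{n_2})$ and $\Omega_{c,d}^{(r)}(X)=X^{d}\cdot\omega_{c}^{(r_2)}(X^{n_1})$, together with their denominators $u_k,v_k$, are functions of $r$ alone. Fixing once and for all a canonical pair $(s_k,t_k)\in\Z^2$ with $s_ku_k+t_kv_k=1$ (e.g., the output of the extended Euclidean algorithm applied to $(u_k,v_k)$), the formula $\gamma_k^{(r)}(X):=s_k\Psi_{a,b}^{(r)}(X)+t_k\Omega_{c,d}^{(r)}(X)$ produces a degree-$k$ polynomial satisfying $v_k\gamma_k^{(r)}-\Psi_{a,b}^{(r)}\in\Z[X]$ and $u_k\gamma_k^{(r)}-\Omega_{c,d}^{(r)}\in\Z[X]$ that depends only on $r$. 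Theorem \ref{n1n2} then gives that $(\gamma_k^{(r)}(\sqrt[n]{m}))_{k=0}^{n-1}$ is an integral basis of $\Q(\sqrt[n]{m})$ for every square-free $m$ with $m\equiv r\pmod{n_0}$, closing the induction.

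The main obstacle is conceptually small but technically fiddly: one must verify that every quantity appearing in the construction of Theorem \ref{n1n2} (the polynomials $\Psi_{a,b}$ and $\Omega_{c,d}$, their denominators $u_k,v_k$, and the B\'ezout pair used to glue them) can be read off from $r$ alone and does not secretly depend on the particular representative $m$. The coprimality $\gcd(u_k,v_k)=1$ required for the gluing is inherited from $\gcd(\ind(\sqrt[n_1]{m}),\ind(\sqrt[n_2]{m}))=1$, which is built into the hypotheses of Theorem \ref{n1n2}; after this bookkeeping, no new arithmetic input beyond Theorems \ref{per} and \ref{n1n2} is needed.
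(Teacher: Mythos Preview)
Your approach is correct and essentially the same as the paper's: invoke Theorem~\ref{per} for each prime-power factor and then feed the resulting periodic families of polynomials into the construction of Theorem~\ref{n1n2}, checking that every datum in that construction (the $\Psi$'s, the $\Omega$'s, their denominators, and the B\'ezout gluing) is determined by $m\bmod n_0$ alone via the Chinese Remainder Theorem. One small slip to fix: with $s_ku_k+t_kv_k=1$ the combination that actually satisfies $v_k\gamma_k-\Psi_{a,b}\in\Z[X]$ and $u_k\gamma_k-\Omega_{c,d}\in\Z[X]$ is $\gamma_k^{(r)}=t_k\Psi_{a,b}^{(r)}+s_k\Omega_{c,d}^{(r)}$ (your $s_k$ and $t_k$ are interchanged), after which the leading coefficient is $(s_ku_k+t_kv_k)/(u_kv_k)=1/(u_kv_k)\neq 0$ and the rest of your argument goes through verbatim.
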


\begin{proof} 
For any $i\in \{1,\ldots,j\}$, an integral basis of the fields $\Q(\sqrt[p^{k_i}]{m})$ is repeating periodically modulo $p^{k_i+1}$, and by following the proof of Theorem \ref{n1n2} we can construct an integral basis for $\Q(\sqrt[n]{m})$ by using the integral bases of these subfields. So an integral basis of $\Q(\sqrt[n]{m})$ is repeating periodically modulo $n_0$.
\end{proof}

\subsection{Example}\label{ex2}
As an example we give a similar characterisation of the ring of integers of pure fields of degree $12$, as it was given in I.Gaál and L.Remete \cite{ibm} for degree $2\leq n\leq 10$.\\
One can use Theorem \ref{fo} and Theorem \ref{n1n2} to calculate the appropriate polynomials, but it is much more convenient to use Theorem \ref{per2}. By this theorem, an integral basis of $\Q(\sqrt[12]{m})$ is repeating periodically modulo $72$. So in order to determine an integral basis of any $\Q(\sqrt[12]{m})$, where $m$ is square-free, we have to follow the steps below.
\begin{itemize}
\item For any $r\in \{0,1,\ldots,71\}$, for which $\gcd(r,72)$ is square-free, chose an appropriate $t_r\in\Z$, such that $72t_r+r$ is square-free.
\item Calculate an integral basis of $\Q(\sqrt[12]{72t_r+r})$, in which the elements are written in the basis $(1,\sqrt[12]{72t_r+r},\ldots,\sqrt[12]{(72t_r+r)^{11}}).$
\item Substitute $\sqrt[12]{m}$ in place of $\sqrt[12]{72t_r+r}$ in these bases.
\end{itemize}
We obtained the following results by using MAPLE.

\begin{proposition}
Let $m\neq\pm1$ be a square-free integer. 
\begin{itemize}
\item if $m=72t+r$, where 
$r\in\{ 2,3,6,7,11,14,15,22,23,30,31,34,38,39,42,43,47,50,51,$\\$58,59,66,67,70\}$, then let\\
$\displaystyle B_r=(1, X, X^2, X^3, X^4, X^5, X^6, X^7, X^8, X^9, X^{10}, X^{11}),$
\item if $m=72t+r$, where $r\in\{ 10,19,46,55\}$, then let\\
{\footnotesize$\displaystyle  B_r=\left(1, X, X^2, X^3, X^4, X^5, X^6, X^7, \frac{X^8+X^4+1}{3}, \frac{X^9+X^5+X}{3},\frac{X^{10}+X^6+X^2}{3}, \frac{X^{11}+X^7+X^3}{3}\right),$}
\item if $m=72t+r$, where $r\in\{ 26,35,62,71\}$, then let\\
{\footnotesize$\displaystyle B_r=\left(1, X, X^2, X^3, X^4, X^5, X^6, X^7, \frac{X^8+2X^4+1}{3},\frac{ X^9+2X^5+X}{3},\frac{X^{10}+2X^6+X^2}{3},\frac{ X^{11}+2X^7+X^3}{3}\right),$}
\item if $m=72t+r$, where $r\in\{ 5,13,21,29,61,69\}$, then let\\
{\footnotesize$\displaystyle B_r=\left(1, X, X^2, X^3, X^4, X^5, \frac{X^6+1}{2}, \frac{X^7+X}{2}, \frac{X^8+X^2}{2}, \frac{X^9+X^3}{2}, \frac{X^{10}+X^4}{2}, \frac{X^{11}+X^5}{2}\right),$}
\item if $m=72t+r$, where $r\in\{25,3,41,49,57,65\}$, then let\\
{\footnotesize$\displaystyle B_r=\left(1, X, X^2, X^3, X^4, X^5, \frac{X^6+1}{2}, \frac{X^7+X}{2}, \frac{X^8+X^2}{2},  \frac{X^9+X^6+X^3+1}{4}, \frac{X^{10}+X^7+X^4+X}{4},\right.$\\
\begin{center}
$\displaystyle\left. \frac{X^{11}+X^8+X^5+X^2}{4}\right),$
\end{center}}
\item if $m=72t+r$, where $r=53$, then let\\
{\footnotesize$\displaystyle B_r=\left(1, X, X^2, X^3, X^4, X^5, \frac{X^6+1}{2}, \frac{X^7+X}{2}, \frac{X^8+2X^4+3X^2+4}{6},  \frac{X^9+2X^5+3X^3+4X}{6},\right.$\\
\begin{center}
$\displaystyle\left. \frac{X^{10}+2X^6+3X^4+4X^2}{6}, \frac{X^{11}+2X^7+3X^5+4X^3}{6}\right),$
\end{center}}
\item if $m=72t+r$, where $r=17$, then let\\
{\footnotesize$\displaystyle B_r=\left(1, X, X^2, X^3, X^4, X^5, \frac{X^6+1}{2}, \frac{X^7+X}{2}, \frac{X^8+2X^4+3X^2+4}{6},\frac{X^9+3X^6+8X^5+9X^3+4X+3}{12},\right.$\\
\begin{center}
$\displaystyle\left.  \frac{X^{10}+3X^7+2X^6+9X^4+4X^2+3X+6}{12}, \frac{X^{11}+X^8+2X^7+9X^5+8X^4+4X^3+9X^2+6X+4}{12}\right),$
\end{center}}
\item if $m=72t+r$, where $r=37$, then let\\
{\footnotesize$\displaystyle B_r=\left(1, X, X^2, X^3, X^4, X^5, \frac{X^6+1}{2}, \frac{X^7+X}{2}, \frac{X^8+4X^4+3X^2+4}{6}, \frac{X^9+4X^5+3X^3+4X}{6},\right.$\\
\begin{center}
$\displaystyle\left.  \frac{X^{10}+X^6+3X^4+4X^2+3}{6}, \frac{X^{11}+X^7+3X^5+4X^3+3X}{6}\right),$
\end{center}}
\item if $m=72t+r$, where $r=1$, then let\\
{\footnotesize$\displaystyle B_r=\left(1, X, X^2, X^3, X^4, X^5, \frac{X^6+1}{2}, \frac{X^7+X}{2}, \frac{X^8+4X^4+3X^2+4}{6},\frac{X^9+3X^6+4X^5+9X^3+4X+3}{12},  \right.$\\
 \begin{center}
 $\displaystyle \left. \frac{X^{10}+3X^7+4X^6+9X^4+4X^2+3X}{12},\frac{X^{11}+X^8+4X^7+9X^5+4X^4+4X^3+9X^2+4}{12}\right).$
 \end{center}}
If we substitute $\sqrt[12]{m}$ in place of $X$ in $B_r$, respectively, then we obtain an integral basis of $\Q(\sqrt[12]{m})$
\end{itemize}

\end{proposition}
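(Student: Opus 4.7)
The plan is to combine the periodicity theorem (Theorem~\ref{per2}) with a finite list of explicit computations over residue classes modulo $72$. Since $12 = 2^2 \cdot 3$, Theorem~\ref{per2} gives $n_0 = 2^{2+1} \cdot 3^{1+1} = 72$, so an integral basis of $\Q(\sqrt[12]{m})$ (viewed as a tuple of rational polynomials evaluated at $\sqrt[12]{m}$) depends only on the residue $r$ of $m$ modulo $72$. A square-free integer exists in the class $r \bmod 72$ precisely when $\gcd(r,72)$ is square-free, and these are exactly the residues listed in the proposition.

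For each admissible $r$ I would pick a specific square-free representative $m_r = 72 t_r + r$ and determine an integral basis of $\Q(\sqrt[12]{m_r})$. Two routes are available. Route (a): apply Theorem~\ref{fo} separately to $\Q(\sqrt[4]{m_r})$ (with $p=2$, $k=2$, and $s = v_2(m_r^2 - m_r) - 1$) and to $\Q(\sqrt[3]{m_r})$ (with $p=3$, $k=1$, and $s = v_3(m_r^3 - m_r) - 1$), obtaining integral bases $(\psi_0(\sqrt[4]{m_r}),\dots,\psi_3(\sqrt[4]{m_r}))$ and $(\omega_0(\sqrt[3]{m_r}),\omega_1(\sqrt[3]{m_r}),\omega_2(\sqrt[3]{m_r}))$; then combine them via Theorem~\ref{n1n2} with $n_1=4$, $n_2=3$ by forming the lifted polynomials $\Psi_{i,j}(X)=X^j\psi_i(X^3)$ and $\Omega_{i,j}(X)=X^j\omega_i(X^4)$ and solving the associated CRT congruences (the denominators in the $\psi_i$ have order dividing a power of $2$, those in the $\omega_i$ a power of $3$, so they are coprime) to obtain polynomials $\gamma_k$ of degree $k$ whose values at $\sqrt[12]{m_r}$ form an integral basis. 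Route (b): directly compute an integral basis of $\Q(\sqrt[12]{m_r})$ with a computer algebra system and express the elements in the power basis $(1,\sqrt[12]{m_r},\dots,\sqrt[12]{m_r^{11}})$.

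By Theorem~\ref{per2} the resulting list $B_r$ of polynomials is independent of the choice of representative $m_r$, so substituting $\sqrt[12]{m}$ for $X$ in $B_r$ yields an integral basis of $\Q(\sqrt[12]{m})$ for every square-free $m \equiv r \pmod{72}$. The main obstacle is not conceptual but organizational: one must enumerate all admissible residues (the $48$ classes grouped in the proposition), and in particular handle the delicate cases $r\in\{1,17,37,53\}$, where the interplay between the mod-$8$ and mod-$9$ information produces the denominators $6$ and $12$ and nontrivial polynomial coefficients that do not appear in the simpler cases. Route (b), which is what the statement of the proposition alludes to via its reference to MAPLE, is operationally the cleanest; route (a) explains conceptually why the denominators take precisely the values $1,2,3,4,6,12$ that appear in the list.
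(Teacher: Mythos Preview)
Your proposal is correct and follows essentially the same approach as the paper: invoke Theorem~\ref{per2} to reduce to one representative per admissible residue class modulo $72$, then obtain an integral basis for each representative either by combining Theorems~\ref{fo} and~\ref{n1n2} or, more conveniently, by a direct computer-algebra computation (MAPLE). The paper makes exactly this choice, noting that the route via Theorem~\ref{per2} plus a finite computation is the more practical one.
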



\begin{thebibliography}{10}

\normalsize
\baselineskip=17pt

\bibitem{ber}
W.E.H.Berwick,
{\em Integral bases},
Cambridge University Press (Cambridge Tracts in Mathematics and Mathematical Physics No. 22) (1927).

\bibitem{ded}
R.Dedekind,
{\em Ueber die Anzahl der Idealklassen in reinen kubischen Zahlkörpern}, 
J. Reine Angew. Math. {\bf 121} (1900), 40--123.

\bibitem{fun}
T.Funakura,
{\em On integral bases of pure quartic fields},
Math. J. Okayama Univ. {\bf 26} (1984), 27--41.

\bibitem{FMN}
{L.{El Fadil}, J.{Montes} and E.{Nart}},
{\em {Newton polygons and $p$-integral bases of quartic number fields}}, {{J. Algebra Appl.}}, {\bf 11 (4)} (2012), 33p.

\bibitem{ibm}
{I.{Ga\'al} and L.{Remete}}, {\em Integral bases and monogenity of pure fields},
Journal of Number Theory {\bf 173} (2017), 129--146.

\bibitem{sextic}
{I.{Ga\'al} and L.{Remete}}, {\em Integral bases and monogenity of the simplest sextic fields}, Acta Arithmetica {\bf 182 (2)} (2018), 173--183.

\bibitem{maps}
T.A.Gassert, {\em A note on the monogenity of power maps}, Albanian Journal of Mathematics {\bf 11} (2017), 3--12.

\bibitem{GMN} 
J.Gu\`ardia, J.Montes and E.Nart, {\em Newton polygons of higher order in algebraic number theory,}
Trans. Amer. Math. Soc. \textbf{364 (1)} (2012) 361--416.


\bibitem{n3}
A.Hameed and T.Nakahara,
{\em Integral bases and relative monogenity of pure octic fields},
Bull. Math. Soc. Sci. Math. R\'epub. Soc. Roum., Nouv. S\'er. 
{\bf 58 (106)}, (2015) No.4, 419--433.

\bibitem{nnn}
A.Hameed, T.Nakahara, S.M.Husnine and S.Ahmad, 
{\em On existence of canonical number system in certain classes of pure algebraic number fields}, 
J. Prime Research in Mathematics, {\bf 7} (2011), 19--24.

\bibitem{Nark}
{W.{Narkiewicz}}, {\em {Elementary and analytic theory of algebraic numbers. 3rd ed.}}, 
{Springer Monogr. Math.} (2004).

\bibitem{oku}
K.Okutsu,
{\em Integral basis of the field $\Q(\sqrt[n]{a})$}, Proc. Japan Acad., Ser. A, {\bf 58} (1982), 219--222.

\bibitem{ore}
\O. Ore, \textit{Newtonsche Polygone in der Theorie der algebraischen Körper,} Math. Ann. \textbf{99} (1928)
84--117.

\bibitem{pohst}
M. {Pohst} and H. {Zassenhaus},
{\em Algorithmic algebraic number theory}, {Cambridge University Press,} (1989).

\end{thebibliography}
\end{document}